\title{On the Hochschild and cyclic (co)homology of rapid decay group algebras}
\author{Ronghui Ji}
\address{
		Department of Mathematical Sciences\\
		Indiana University-Purdue University, Indianapolis\\ 
		402 N. Blackford Street\\
		Indianapolis, IN 46202\\
		}
\email{ronji@math.iupui.edu}
\author{Crichton Ogle}
\address{
		Department of Mathematics\\
		The Ohio State University\\
        Columbus, OH 43210\\
        }
\email{ogle@math.ohio-state.edu}
\author{Bobby W. Ramsey}
\address{
		Department of Mathematics\\
		University of Hawai`i at M\=anoa\\ 
		2565 McCarthy Mall\\
		Honolulu, HI 96822\\
		}
\email{bwramsey@math.hawaii.edu}
\newtheorem{theorem}{Theorem}
\newtheorem{lemma}{Lemma}
\newtheorem{corollary}{Corollary}
\newtheorem*{conjSrBC}{Conjecture - $\B$-SrBC }
\newtheorem*{conjl1SrBC}{Conjecture - $\ell^1$-SrBC }
\newtheorem*{thmA}{Theorem A}
\newtheorem*{thmB}{Theorem B}
\theoremstyle{definition}
\newtheorem{definition}{Definition}
\newtheorem{example}{Example}
\newtheorem*{nnremark}{Remark}
\newcommand{\isom}{{\,\cong\,}}
\newcommand{\tensor}{\otimes}
\newcommand{\btensor}{{\hat{\otimes}}}
\newcommand{\bHom}{\operatorname{Hom}^{bdd}}
\newcommand{\Hom}{\operatorname{Hom}}
\newcommand{\Ext}{\operatorname{Ext}}
\newcommand{\bExt}{\operatorname{bExt}}
\newcommand{\Dist}{\operatorname{Dist}}
\newcommand{\N}{\mathbb{N}}
\newcommand{\E}{\mathcal{E}}
\newcommand{\BE}{\mathcal{B}\textrm{-}\mathcal{E}}
\newcommand{\B}{\mathcal{B}}
\newcommand{\cP}{\mathcal{P}}
\newcommand{\BG}{{\mathcal{H}_{\B,L}\left(G\right)}}
\newcommand{\CG}{{\mathbb{C}[G]}}
\newcommand{\RH}{\mathcal{H}}
\newcommand{\surj}{\twoheadrightarrow}
\newcommand{\inj}{\rightarrowtail}
\begin{document}
\begin{abstract} We show that the technical condition of solvable conjugacy bound, introduced in \cite{JOR1}, can be removed without affecting the main results of that paper. The result is a Burghelea-type description of the summands $HH_*^t(\BG)_{<x>}$ and $HC_*^t(\BG)_{<x>}$ for any bounding class $\B$, discrete group with word-length $(G,L)$ and conjugacy class $<x>\in <G>$. We use this description to prove the conjecture $\B$-SrBC of \cite{JOR1} for a class of groups that goes well beyond the cases considered in that paper. In particular, we show that the conjecture $\ell^1$-SrBC (the Strong Bass Conjecture for the topological $K$-theory of $\ell^1(G)$) is true for all semihyperbolic groups which satisfy SrBC, a statement consistent with the rationalized Bost conjecture for such groups.
\end{abstract}
\maketitle
\vspace{.5in}

\tableofcontents


\section{Introduction} 
Given a bounding class $\B$ (the definition of which we recall below) and group with word-length function $(G,L)$, one may 
construct the rapid decay algebra $\BG$. This algebra was introduced in \cite{JOR1}; it is a Fr\'echet algebra whenever 
the bounding class $\B$ is equivalent to a countable class (there are no known cases when this doesn't occur). When $\B = \cP$, 
the class of polynomial bounding functions, this algebra is precisely the rapid decay algebra $H^{1,\infty}_L(G)$ introduced 
by Jolissaint in \cite{Jo1}, \cite{Jo2}. Because $\BG$ is a rapid decay algebra formed using weighted $\ell^1$-norms one has, 
associated to each conjugacy class $<x>\in <G>$, summands $F_*^t(\BG)_{<x>}$ of $F_*^t(\BG)$, where 
$F = HH, HC, HPer$, and thus projection maps
\[
p_{<x>}:F_*^t(\BG)\surj F_*^t(\BG)_{<x>},\quad F = HH, HC, HPer
\]
and similarly for cohomology.

Recall that a conjugacy class {$<x>$} is {\it elliptic} if $<x>$ has finite order, and {\it non-elliptic} if $x$ has infinite order. 
As in \cite{JOR1}, one may then posit - for a given bounding class $\B$ and group with word-length $(G,L)$ - the following 
generalization of the Strong Bass Conjecture:
\begin{conjSrBC}
For each non-elliptic conjugacy class $<x>$, the image of the composition
\[
p_{<x>}\circ ch_*:K^t_*(\BG)\to HC^t_*(\BG)\surj HC^t_*(\BG)_{<x>}
\]
is zero.
\end{conjSrBC}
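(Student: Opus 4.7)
The plan is an attack strategy (since this is a conjecture, not a theorem), modelled on how the algebraic Strong Bass Conjecture has traditionally been approached and designed to capitalize on the Burghelea-type decomposition of $HC_*^t(\BG)_{<x>}$ promised in the abstract.

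First I would use the Burghelea-type description to identify, for a non-elliptic class $<x>$, the target $HC_*^t(\BG)_{<x>}$ as a topological cyclic homology group built from the centralizer $Z(x)$ modulo its central infinite-cyclic subgroup $\langle x\rangle$. The inclusion $Z(x) \inj G$ induces an algebra map $\RH_{\B,L}(Z(x)) \to \BG$ through which $p_{<x>} \circ ch_*$ must factor, so the task reduces to the corresponding vanishing inside $\RH_{\B,L}(Z(x))$ --- a setting in which the infinite-order element $x$ is now central and the algebra is substantially more tractable.

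Next I would exploit this centrality: one should try to split $\RH_{\B,L}(Z(x))$ across $\langle x\rangle$ via either a Pimsner--Voiculescu type long exact sequence, a K\"unneth decomposition, or the rapid decay analogue of Bass's ``integral rank'' trace on $\CG$. Under either route, the image of an idempotent in $M_n(\RH_{\B,L}(Z(x)))$ under $p_{<x>}\circ ch_*$ should vanish because an infinite-cyclic twist by the central $x$ cannot be supported on a finitely generated projective module. One then promotes this $HH_0$-level vanishing to higher $HC_*^t$ using the Connes--Tsygan exact sequence and the $SBI$ machinery, provided the relevant completions behave well.

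The main obstacle will be this last vanishing. Its purely algebraic counterpart --- that the Hattori--Stallings rank of a finitely generated projective $\CG$-module is supported on elliptic classes --- is already the classical Strong Bass Conjecture, which is open for general $G$. The additional analytic hurdle specific to $\BG$ is showing that $\B$-weighted $\ell^1$-completion creates no new idempotents whose traces detect a non-elliptic $<x>$, and that the algebraic vanishing (when known, e.g.\ for semihyperbolic $G$ satisfying SrBC) propagates through the Connes--Tsygan sequence from $HH_0$ up to all of $HC_*^t$. This is essentially the content of $\B$-SrBC itself, and here a $\B$-isocohomological hypothesis on $G$ and its centralizers is likely to carry the bulk of the argument.
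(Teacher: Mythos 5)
This statement is a conjecture; the paper does not prove it in general, and your proposal (rightly) only offers an attack strategy, so the comparison is between strategies. Your first step --- using the Burghelea-type description of Theorem A to identify $HC_*^t(\BG)_{<x>}$ with homology built from $G_x/(x)$ --- is exactly where the paper starts. But your second step contains a genuine error: you assert that $p_{<x>}\circ ch_*$ ``must factor'' through the algebra map $\RH_{\B,L}(G_x)\to\BG$ induced by the inclusion of the centralizer. It does not. The Burghelea decomposition identifies the \emph{target} $HC_*^t(\BG)_{<x>}$ with data attached to $G_x$, but there is no induced map $K^t_*(\BG)\to K^t_*(\RH_{\B,L}(G_x))$ (restriction of scalars would require finite index), so one cannot reduce to a statement about idempotents over the centralizer algebra in which $x$ is central. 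Without that reduction the proposed Pimsner--Voiculescu/K\"unneth/rank-function step has nothing to act on, and such splittings are in any case not available off the shelf for $\B$-weighted $\ell^1$-completions. The suggestion of promoting an $HH_0$-level vanishing ``up'' to all of $HC^t_*$ via SBI also runs opposite to how the standard argument works.

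The mechanism the paper actually uses, visible in Theorem B and Section 4, goes through the periodicity operator: the Chern character is compatible with $S$, so its image in $HC^t_*(\BG)_{<x>}$ lies in $\bigcap_n \im(S^n)$, and by \cite{JOR1} it suffices to show $S$ is \emph{nilpotent} on each non-elliptic summand (the $\B$-nilpotency condition). Theorem A, together with $\B$-isocohomologicality of the centralizers (e.g.\ for semihyperbolic $G$, where the $G_x$ are quasiconvex and $(x)$ is undistorted), identifies $HC^*_t(\BG)_{<x>}$ with the ordinary cohomology $H^*(B(G_x/(x)))$, so nilpotency of $S$ --- guaranteed by finite cohomological dimension of $G_x/(x)$ over $\Q$, or assumed via the algebraic nilpotency condition --- transfers from Burghelea's algebraic setting to the completed one. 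You correctly sense that the analytic content is ``the completion creates nothing new,'' but the paper makes this precise through isocohomologicality and distortion bounds on $(x)$, not through a trace computation on idempotents.
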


When $\B = \cP$, the subalgebra $\RH_{{\cP},L}(G) = H^{1,\infty}L(G)$ is smooth in $\ell^1(G)$ by \cite{Jo1}, \cite{Jo2}; 
in this case the above conjecture can be restated as:
\begin{conjl1SrBC}
For each non-elliptic conjugacy class $<x>$, the image of the composition
\[
p_{<x>}\circ ch_*:K^t_*(\ell^1(G))\to HC^t_*(H^{1,\infty}_L(G))\surj
HC^t_*(H^{1,\infty}_L(G))_{<x>}
\]
is zero.
\end{conjl1SrBC}

The conjecture $\ell^1$-SrBC is certainly the most important special case of the first more general 
conjecture; as shown in the appendix of \cite{JOR1} these conjectures follow from the rational surjectivity 
of an appropriately defined Baum-Connes assembly map. Thus any counter-example to one of these conjectures 
would, in turn, provide a counterexample to a Baum-Connes type conjecture holding for the corresponding rapid decay algebra.

It is a result due to Burghelea that one has decompositons
\[
HH_*(\CG)\isom \underset{<x>\in <G>}{\oplus} HH_*(\CG)_{<x>},\quad
HC_*(\CG)\isom \underset{<x>\in <G>}{\oplus} HC_*(\CG)_{<x>}
\]
and for each conjugacy class $<x>$ an isomorphism
\[
HH_*(\CG)_{<x>}\isom H_*(BG_x;\C)
\]
where $G_x$ denotes the centralizer of $x\in G$. This last fact allows one to derive corresponding descriptions of 
$HC_*(\CG)_{<x>}$; specifically, for non-elliptic classes one has
\[
HC_*(\CG)_{<x>}\isom H_*(B(G_x/(x));\C)
\]
where $(x)\subset G_x$ denotes the infinite cyclic subgroup generated by $x$, and $G_x/(x)$ the quotient group. The 
standard way of arriving at this result is via the isomorphism of sets
\begin{equation}\label{eqn:fundmap}
G_x\backslash G\to S_{<x>}
\end{equation}
where $S_{<x>}\subset G$ is the set of elements conjugate to $x$, and the correspondence is given by $G_xg\mapsto g^{-1}xg$. 
When $G$ is equipped with word-length $L$, this map becomes a $\B$-G-morphism of weighted right $\B$-G-sets in the sense of 
\cite{JOR1}, but one whose inverse is in general unbounded. This leads to the notion of $\B$-solvable conjugacy bound, 
introduced in \cite{JOR1} - the condition that the map in (\ref{eqn:fundmap}) is a $\B$-G-isomorphism - which in turn allows 
for the identification of the summands $F_*^t(\BG)_{<x>}$ in terms of the rapid decay homology of $BG_x$ or 
$B(G_x/(x))$. Unfortunately, this condition is very restrictive, as it requires the group to have a conjugacy problem which 
is solvable in $\B$-bounded time. In particular, when $\B = \cP$, it is difficult to go much beyond hyperbolic groups with 
this restriction in place (the main result of \cite{JOR1} was verification of a relative version of the $\ell^1$-BC conjecture 
in the presence of relative hyperbolicity).

With this in mind, we may now state our main result, by which the solvable conjugacy bound constraint of \cite{JOR1} is removed. 
The consequence is a complete characterization of the conjugacy class summands of the Hochschild and cyclic (co)-homology groups 
of $\BG$ for any group with word-length $(G,L)$ and bounding class $\B$.  Recall that the \underline{distortion} of a subgroup with
length function $(H, L_H)$ of a group with length function $(G,L_G)$ is the function $Dist(H) : \N \to \N$ defined by
$Dist(H)(n) = \max \left\{ L_H(h) \, | \, h \in H, L_H(h) \leq n \right\}$.

\begin{thmA}
Let $(G,L)$ be a discrete group equipped with proper word-length function and $\B$ a countable bounding class. Then for each 
conjugacy class $<x>\in <G>$ there are isomorphisms in topological Hochschild (co)-homology
\begin{gather*}
HH^*_t(\BG)_{<x>} \isom {\B}H^*(BG_x),\quad HH_*^t(\BG)_x \isom {\B}H_*(BG_x)
\end{gather*}
These isomorphisms imply the existence of isomorphisms in topological cyclic (co)-homology
\begin{gather*}
	\begin{rcases}
		HC^*_t(\BG)_{<x>} \isom {\B}H^*(BG_x)\otimes HC^*(\C)\\
		HC_*^t(\BG)_{<x>} \isom {\B}H_*(BG_x)\otimes HC_*(\C)
	\end{rcases} \quad (<x>\text{ elliptic})\\
HC^*_t(\BG)_{<x>} \isom {\B}H^*(B(G_x/(x))),\quad <x>\text{ non-elliptic and } \Dist((x))\leq \B \\
HC_*^t(\BG)_{<x>} \isom
{\B}H_*(B(G_x/(x))),\quad <x>\text{ non-elliptic and } \Dist((x))\leq \B
\end{gather*}
\end{thmA}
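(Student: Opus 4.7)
My plan is to prove the Hochschild statement first and then deduce the cyclic isomorphisms from it via the Connes $SBI$ long exact sequence, paralleling the classical Burghelea argument while removing the solvable conjugacy bound hypothesis. The opening step is a conjugacy-class decomposition of the topological Hochschild complex: since every $\B$-seminorm on $\BG$ depends only on the word length $L$, the Fr\'echet space $\BG$ splits as a completed direct sum $\widehat{\bigoplus}_{<x>\in <G>}V_{<x>}$ where $V_{<x>}$ is the closure of $\mathbb{C}[S_{<x>}]$, and because conjugation preserves each $S_{<x>}$ this decomposition is compatible with the $\BG$-bimodule structure, so the topological Hochschild and cyclic complexes split conjugacy-classwise. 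In particular $HH_*^t(\BG)_{<x>}$ is the homology of the summand whose $n$-chains are $\B$-weighted tuples $(g_0,\ldots,g_n)$ with $g_0 g_1\cdots g_n\in S_{<x>}$.

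The main step is to identify this Hochschild summand with ${\B}H_*(BG_x)$ \emph{without} the solvable conjugacy bound. Classically one uses \eqref{eqn:fundmap} to transport the summand into the bar complex of $BG_x$; in \cite{JOR1} the solvable conjugacy bound was precisely what upgraded \eqref{eqn:fundmap} to a $\B$-$G$-isomorphism of weighted sets. Without it, the forward map is still a bounded $\B$-$G$-morphism but its inverse can have arbitrary word-length distortion. The idea is to bypass any global inverse by working filtration-wise: filter both the conjugacy-class summand complex and a $\B$-flat resolution of $\mathbb{C}$ over $\mathcal{H}_{\B,L}(G_x)$ by word length, and build a chain map together with a contracting homotopy using partial sections of $G\surj G_x\backslash G$ whose growth is controlled on each filtration level. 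Each level-wise piece is bounded, so after completion the operators remain continuous in every $\B$-seminorm and yield a quasi-isomorphism over $\BG$ and $\mathcal{H}_{\B,L}(G_x)$; dualizing gives the cohomology version.

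With the Hochschild identification in hand, the cyclic statements follow from the Connes long exact sequence in each summand together with the standard analysis of the cyclic operator $t$. For elliptic $<x>$ the operator $t$ has finite order on the generators of the summand and one recovers the tensor factor $HC_*(\mathbb{C})$ as in Burghelea's classical calculation. For non-elliptic $<x>$ the hypothesis $\Dist((x))\leq\B$ is precisely what makes the projection $G_x\surj G_x/(x)$ a bounded $\B$-morphism of rapid decay group algebras; the standard $S^1$-quotient argument then yields $HC_*^t(\BG)_{<x>}\isom {\B}H_*(B(G_x/(x)))$, and dually in cohomology.

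The hardest step is the construction of the contracting homotopy in the second paragraph: producing a chain contraction that remains continuous in every $\B$-Fr\'echet seminorm even though no global section of $G\surj G_x\backslash G$ is $\B$-bounded. The proposed remedy --- replacing a single unbounded global inverse of \eqref{eqn:fundmap} by a sequence of bounded partial inverses indexed by the word-length filtration --- is the crucial new ingredient that allows the solvable conjugacy bound hypothesis of \cite{JOR1} to be dropped.
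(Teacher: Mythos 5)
Your overall architecture --- decompose the Hochschild complex over conjugacy classes, identify each summand with ${\B}H_*(BG_x)$, then run the Connes--Gysin ($SBI$) sequence summand-by-summand, using finiteness of the cyclic operator in the elliptic case and the hypothesis $\Dist((x))\leq\B$ to pass to $G_x/(x)$ in the non-elliptic case --- matches the paper's. The first and third steps are fine. The gap is in your main step, which is exactly the step where the solvable conjugacy bound has to be circumvented.

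Your proposed remedy is to filter by word length and build a chain map and contracting homotopy out of ``partial sections of $G\surj G_x\backslash G$ whose growth is controlled on each filtration level,'' claiming that since ``each level-wise piece is bounded,'' the assembled operators remain continuous in every $\B$-seminorm after completion. This does not work as stated. On any finite word-length ball a section is trivially bounded (the ball is a finite set), but continuity of the assembled operator on the Fr\'echet completion requires a \emph{uniform} bound of the form $f(w(\text{output}))\leq f'(w(\text{input}))$ with $f'\in\B$ holding across all filtration levels simultaneously --- and the existence of such a uniform bound for an inverse of the map (\ref{eqn:fundmap}) is precisely the $\B$-solvable conjugacy bound hypothesis you are trying to remove. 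As written, the ``crucial new ingredient'' is circular: either the level-wise bounds assemble into a single function of $\B$ (which is the hypothesis being dropped) or the completed operator fails to be continuous. The paper's actual mechanism is different and avoids inverting (\ref{eqn:fundmap}) altogether: it takes a type $\B$ simplicial resolution $\Gamma_{\bullet}\surj G$ by \emph{free} groups, where conjugator lengths are linearly bounded and hence the analogue of (\ref{eqn:fundmap}) for $\Gamma_0$ is a $\B$-bounded isomorphism with $\B$-bounded inverse; it then forms a bisimplicial set $X_{\bullet\bullet}$ interpolating between $\coprod_{<y>}(\Gamma_0)_y\backslash\Gamma_0\underset{\Gamma_0}{\times}E(\Gamma_0)_{\bullet}$ and $N^{cy}_{\bullet}(G)$, and shows the relevant augmentation admits a $\B$-bounded set-theoretic splitting (Lemma \ref{lem:BddSplitting}) --- a splitting of the augmentation $\Gamma_0\surj G$, which \emph{can} be chosen length-preserving because $\Gamma_0$ is free on a copy of $G$, not a splitting of $G\surj G_x\backslash G$. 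The resolution-type argument of \cite{O2} then transfers the computation down to $G$. If you want to salvage your approach you would need to replace the filtration-wise sections by some such resolution device; as it stands the second paragraph of your proposal asserts the key boundedness rather than proving it.
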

\footnotetext{The statement of this theorem amends Theorem 1.4.7 of \cite{JOR1} where the effects of distortion were not
taken into account.  }
Here the weighting on $BG_x$ and $B(G_x/(x))$ comes from the induced word-length function on $G_x$ and the corresponding quotient 
word-length function on $G_x/(x)$, while ${\B}H^*(_-)$, ${\B}H_*(_-)$ denote the $\B$-bounded (co)-homology groups as defined in 
\cite{JOR2} (and reviewed below). For the cyclic groups, the $\C[u]$-module and comodule structures of the terms on the right 
are exactly as in the case of the group algebra for both elliptic and non-elliptic classes (compare \cite{B1}). Finally, $\Dist((x))$ 
refers to the distortion of the infinite cyclic subgroup $(x)$ of $G$; the condition $\Dist((x))\leq \B$ means that the distortion of 
$(x)$ is bounded above by a function in $\B$.

This theorem allows for the construction of a class of groups satisfying the $\B$-SrBC conjecture, modeled on the class of groups 
originally considered by the first author in \cite{Ji1} and independently by Chadha and Passi in \cite{CP}, and extended to a slightly larger class by Emmanouil in \cite{E1}. It also 
allows for direct verification of the $\ell^1$-SrBC conjecture for a large and important class of groups, as given by the following 
theorem. Recall that a group $G$ satisfies the \underbar{nilpotency condition} 
if $S:HC^*(\CG)_{<x>}\to HC^{*+2}(\CG)_{<x>}$ is a nilpotent operator for all non-elliptic classes $<x>\in <G>$. 
Similarly, $G$ satisfies the \underbar{$\B$-nilpotency condition} if $S:HC^*_t(\BG)_{<x>}\to HC^{*+2}(\BG)_{<x>}$ 
is a nilpotent operator for all non-elliptic classes $<x>\in <G>$. By \cite{JOR1}, if $G$ satisfies the $\B$-nilpotency condition, then 
$\B$-SrBC is true for $G$. Finally, $(G,L)$ is $\B$-isocohomological if the comparison map in cohomology ${\B}H^*(BG)\to H^*BG)$ is an 
isomorphism (\cite{JOR2}).

\begin{thmB}
Let $(G,L)$ be a semihyperbolic group in the sense of \cite{AB}.  If $G$ satisfies the nilpotency condition, then it satisfies the
$\B$-nilpotency condition for any bounding class $\B$ containing $\cP$. In particular, if $G_x/(x)$ has finite cohomogical 
dimension over $\Q$ for each non-elliptic class $<x>$, then $G$ satisfies $\B$-SrBC for all bounding classes $\B$ containing $\cP$.
\end{thmB}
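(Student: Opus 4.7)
The plan is to reduce Theorem B to Theorem A together with Burghelea's classical description and the $\B$-isocohomology of semihyperbolic groups. First, I record that in a semihyperbolic group $(G,L)$ every infinite cyclic subgroup $(x)$ is undistorted; this is a standard consequence of the existence of a bicombing (\cite{AB}). Consequently $\Dist((x))$ is bounded above by a linear, hence polynomial, function, so the hypothesis $\Dist((x))\leq\B$ in Theorem A is automatically satisfied for every non-elliptic $<x>$ whenever $\B\supset\cP$. Theorem A therefore yields
\[
HC^*_t(\BG)_{<x>}\isom {\B}H^*\bigl(B(G_x/(x))\bigr),
\]
while Burghelea's theorem supplies
\[
HC^*(\CG)_{<x>}\isom H^*\bigl(B(G_x/(x));\mathbb{C}\bigr).
\]

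The inclusion $\CG\hookrightarrow\BG$ induces a canonical map $HC^*_t(\BG)_{<x>}\to HC^*(\CG)_{<x>}$ that intertwines the Connes operator $S$ on both sides; under the two identifications above it corresponds to the comparison map ${\B}H^*\to H^*$ for $B(G_x/(x))$. The key geometric input is that centralizers of elements in a semihyperbolic group are themselves semihyperbolic (Alonso--Bridson), and by \cite{JOR2} a semihyperbolic group is $\B$-isocohomological for any $\B\supset\cP$ via a polynomially bounded resolution arising from the bicombing. Combining this with the central extension $1\to(x)\to G_x\to G_x/(x)\to 1$, whose kernel is undistorted, one concludes that $G_x/(x)$ is $\B$-isocohomological, so the comparison map above is an isomorphism.

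Granting this, the first assertion of Theorem B is formal: if $S$ is nilpotent on $HC^*(\CG)_{<x>}$, the $S$-equivariant isomorphism above transports nilpotency to $HC^*_t(\BG)_{<x>}$, which is precisely the $\B$-nilpotency condition. For the ``in particular'' clause, assume $G_x/(x)$ has cohomological dimension at most $n$ over $\mathbb{Q}$. Then $H^i(B(G_x/(x));\mathbb{C})=0$ for $i>n$, and since $S$ raises cohomological degree by $2$, some finite power of $S$ vanishes on $HC^*(\CG)_{<x>}$, establishing the nilpotency condition. The first part then yields the $\B$-nilpotency condition, and the implication (nilpotency $\Rightarrow$ SrBC) from \cite{JOR1} recalled above the theorem gives $\B$-SrBC.

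The main obstacle will be verifying $\B$-isocohomology of the quotient $G_x/(x)$. While the $\B$-isocohomology of semihyperbolic groups themselves is available from \cite{JOR2}, passage to quotients by central undistorted infinite cyclic subgroups requires a separate argument, most naturally a comparison of Lyndon--Hochschild--Serre spectral sequences in ordinary and $\B$-bounded cohomology for the central extension $1\to(x)\to G_x\to G_x/(x)\to 1$. The fiber $(x)\isom\mathbb{Z}$ is tautologically $\B$-isocohomological and the total group $G_x$ is so by semihyperbolicity, so undistortion of $(x)$ should suffice for the bounded spectral sequence to exist and for the comparison to collapse at $E_2$, transferring $\B$-isocohomology to the base.
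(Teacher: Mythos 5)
Your overall route coincides with the paper's: Theorem A plus Burghelea's description, quasi-convexity of centralizers in a semihyperbolic group (so each $G_x$ is combable and quasi-isometrically embedded, hence $\B$-isocohomological for $\B\supseteq\cP$ by Ogle/Meyer/JOR2), undistortion of infinite cyclic subgroups so that the hypothesis $\Dist((x))\leq\B$ of Theorem A is automatic, and the standard observation that finite rational cohomological dimension of $G_x/(x)$ forces nilpotency of $S$, which combined with the implication (nilpotency $\Rightarrow$ $\B$-SrBC) from \cite{JOR1} gives the ``in particular'' clause.

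The one point where your write-up is genuinely incomplete is the comparison isomorphism $HC^*_t(\BG)_{<x>}\isom HC^*(\CG)_{<x>}$. You route it through $\B$-isocohomology of the quotient $G_x/(x)$, correctly observe that this does not follow formally from $\B$-isocohomology of $G_x$, and then leave the bounded Lyndon--Hochschild--Serre comparison for $1\to(x)\to G_x\to G_x/(x)\to 1$ as a sketch (``should suffice \dots collapse at $E_2$''). As it stands that is a gap: you have neither constructed the bounded spectral sequence nor verified the comparison. The detour is, however, avoidable. The only isocohomology input actually required is at the Hochschild level: $HH^*_t(\BG)_{<x>}\isom{\B}H^*(BG_x)\isom H^*(BG_x)\isom HH^*(\CG)_{<x>}$, which uses only that the centralizer $G_x$ itself is $\B$-isocohomological. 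Feeding this into the two Connes--Gysin (SBI) sequences for the $<x>$-summands of the cyclic theories of $\BG$ and $\CG$, and applying the five lemma inductively from low degrees, yields an isomorphism $HC^*_t(\BG)_{<x>}\isom HC^*(\CG)_{<x>}$ that automatically intertwines the periodicity operators. This is in effect how the paper proceeds: the corollary of Section 3 computes $HC^*_t(\BG)_{<x>}$ from $HH^*_t(\BG)_{<x>}$ via Connes--Gysin, and the theorem of Section 4 records that under exactly your hypotheses ($G_x$ is $\B$-IC and $(x)$ at most $\B$-distorted) the resulting identification $HC^*(\CG)_{<x>}\isom HC^*_t(\BG)_{<x>}$ respects $S$. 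With that substitution your argument closes, and the remaining steps match the paper's.
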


\begin{proof}
By \cite{AB} the centralizer subgroups $G_h$ are quasi-convex subgroups
of the (linearly) combable group $G$, hence each $G_h$ is combable
and quasi-isometrically embedded into $G$. By \cite{O1}, \cite{M1}, or \cite{JOR2} each $G_h$ is $\B$-isocohomological
for any bounding class containing $\cP$. Moreover, $(x)$ is quasi-isometrically embedded in $G_x$ - hence also in $G$ - for each conjugacy class $<x>\in <G>$, implying its distortion in $G$ is at most polynomial. Thus for each non-elliptic conjugacy
class $<h> \in <G>$ we have isomorphisms
\[
HC^*_t(\BG)_{<h>} \isom {\B}H^*(B(G_h/(h))) \isom H^*( B(G_h/(h)) )
\]
By Burghelea's result together with Theorem A above, the nilpotency condition for $G$ implies the $\B$-nilpotency condition for $(G,L)$ whenever $\B$ is a bounding class containing $\cP$.
\end{proof}
\newpage


\section{Preliminaries}

Let $\mathcal{S}$ denote the set of non-decreasing functions $\{ f : \R_+ \to \R^+\}$. Suppose
$\phi : \mathcal{S}^n \to \mathcal{S}$ is a function of sets, and $\B \subset \mathcal{S}$. $\B$ is
weakly closed under $\phi$ if for each $(f_1, f_2, \ldots, f_n) \in \B^n$ there is $f \in \B$ with
$\phi( f_1, f_2, \ldots, f_n ) \leq f$. A bounding class is a subset $\B \subset \mathcal{S}$ satisfying
\begin{enumerate}
	\item It contains the constant function $1$.
	\item It is weakly closed under the operations of taking positive rational linear combinations.
	\item It is weakly closed under the operation $(f,g) \mapsto f \circ g$ for $g \in \mathcal{L}$,
		where $\mathcal{L}$ denotes the linear bounding class $\{ f(x) = ax + b \, | \, a,b \in \Q_+\}$.
\end{enumerate}
A bounding class is \underline{composable} if it is weakly closed under the operation of composition of fuctions.

Naturally occurring classes besides $\mathcal{L}$ are $\B_{min} = \Q_+$, $\cP =$ the set of polynomials
with non-negative rational coefficients, and $\mathcal{E} = \{ e^f \, | \, f \in \mathcal{L} \}$.
Bounding classes were discussed in detail in \cite{JOR1, JOR2}.   The bounding classes we will be 
considering will all contain $\mathcal{L}$. 

Recall that a function $f : X \to \R_+$, with $X$ discrete, is \underline{proper} if
the preimage of any bounded subset is finite.
A \underline{weighted set} $(X,w)$ will refer to a countable
discrete set $X$ together with a proper function $w:X\to\R_+$.
A morphism between weighted sets $\phi:(X,w)\to (X',w')$ is \underline{$\B$-bounded} if
\[
\forall f\in\B \, \exists f'\in\B \text{ such that } f(w'(\phi(x)))\leq f'(w(x))\qquad\forall x\in X.
\]
Given two weighted sets $(X,w)$ and $(X',w')$, the Cartesian product $X \times X'$ carries a canonical 
weighted set structure, $(X \times X', r )$ with weight function $r( x,x' ) = w(x) + w'(x')$.
Suppose further that $X$ is a right $G$-set and $X'$ is a left $G$-set.  Let $X \underset{G}{\times} X'$
denote the quotient of $X \times X'$ by the relation $( x, gx' ) \sim (xg, x')$.  There is a natural
weight, $\rho$, on $X \underset{G}{\times} X'$ induced by the weight $r$ on $X \times X'$ as follows.
\[ \rho( [ x,x' ] ) = \min_{g \in G} r( xg, g^{-1} x' ). \]

Given a weighted set $(X,w)$ and $f \in \mathcal{S}$, the seminorm $| \cdot |_f$ on $\Hom(X,\C)$ is given
by $| \phi |_f := \sum_{x\in X} | \phi(x) | f( w(x) )$. We will mainly be concerned with the case
$(X,w) = (G, L)$ is a discrete group endowed with a length function $L$. The length function $L$
is called a word-length function ( with respect to a generating set $S$ ) if $L(1) = 1$ and there is
a function $\phi : S \to \R^+$ with
\[ L(g) = \min\left\{ \sum_{i=1}^n \phi(x_i) \, | \, x_i \in S, \, x_1 x_2 \ldots x_n = g \right\} . \]
When $S$ is finite, taking $\phi = 1$ produces the standard word-length function on $G$.

Recall that a simplicial object in a category $\mathcal{C}$ is a covariant functor $B_{\bullet} : \Delta \to \mathcal{C}$,
from the simplicial category into $\mathcal{C}$. That is, for each $k \geq 0$ $B_k \in \mathcal{C}^{(0)}$,
and there exist degeneracy morphisms in $\Hom_{\mathcal{C}}(B_k, B_{k+1})$ and
face morphisms in $\Hom_{\mathcal{C}}(B_k, B_{k-1})$ which satisfy the usual simplicial identities.

The augmented simplicial category, $\Delta^+$, is obtained by adding another object $[-1]$ to $\Delta$,
and a single face morphism in $\Hom_{\Delta^{+}}([-1], [0])$. An augmented simplicial object in a category
$\mathcal{C}$ is a covariant functor $B_{\bullet} : \Delta^+ \to \mathcal{C}$. This consists of a
simplicial set $B_k$, $k \geq 0$, as well as $B_{-1} \in \mathcal{C}^{(0)}$ with an augmentation
$B_0 \surj B_{-1}$.

An augmented simplicial group with word-length $(\Gamma^+_{\bullet}, L_{\bullet})$ consists of an augmented simplicial
group $\Gamma^+_{\bullet}$, with $L_k$ a word-length function on $\Gamma_k$ for all $k \geq -1$. If $\B$
is a bounding class, the augmented simplicial group with word-length $(\Gamma^+_{\bullet}, L_{\bullet})$ is $\B$-bounded
if all face and degeneracy maps, including the augmentation map, are $\B$-bounded with respect to the
word-lengths, $\{L_k\}_{k \geq -1}$. Then $(\Gamma^+_{\bullet}, L_{\bullet})$ is a \underbar{type $\B$-resolution} if
i) $(\Gamma^+_k, L_k)$ is a countably generated free group with $\N$-valued word-length metric $L_k$
generated by a proper function on the set of generators for $\Gamma_k$ for all $k \geq 0$, and
ii) $\Gamma^+_{\bullet}$ admits a simplicial set contraction $\tilde{s} = \{ \tilde{s}_{k+1}: \Gamma_k \to \Gamma_{k+1} \}_{k \geq -1}$
which is a $\B$-bounded set map in each degree. Every countable discrete group $G$ admits a type
$\B$ resolution $\Gamma^+_{\bullet}$ with $G = \Gamma_{-1}$. In fact, starting with $G$, the
resolution can always be constructed so that the face and degeneracy maps, the augmentation, and
the simplicial contraction are all linearly bounded \cite[Appendix]{O1}.

\begin{example}
The cyclic bar construction, $N^{cy}_{\bullet}(G)$.
\end{example}
Let $G$ be a countable group and let $L$ be a word-length function on $G$.
We define $N_{\bullet}^{cy}(G) = \{[n]\mapsto G^{n+1}\}_{n\geq 0}$ with
\begin{align*}
	\partial_i(g_0,g_1,\dots,g_n) &= (g_0,g_1,\dots,g_ig_{i+1},\dots,g_n),\, 0 \leq i \leq n-1,\\
	\partial_n(g_0,g_1,\dots,g_n) &= (g_n g_0,g_1,g_2,\dots,g_{n-1}),\\
	s_j(g_0,g_1,\dots,g_n) &= (g_0,\dots,g_j,1,g_{j+1},\dots,g_n)
\end{align*}

The simplicial weight is given by $w_n(g_0,\dots,g_n) = \sum_{i=1}^n L(g_i)$. 
$N_{\bullet}^{cy}(G)$ is a $\B$-bounded simplicial set.

\begin{example}
The bar resolution, $EG_{\bullet}$.
\end{example} 
Recall that the non-homogeneous bar resolution of $G$ is  
$EG_{\bullet} = \{[n]\mapsto G^{n+1}\}_{n\geq 0}$ with

\begin{align*}
	\partial_i[g_0,\dots,g_n] &= [g_0,\dots,g_ig_{i+1},\dots,g_n],\, 0 \leq i \leq n-1,\\
	\partial_n[g_0,\dots,g_n] &= [g_0,\dots,g_{n-1}],\\
	s_j[g_0,\dots,g_n] &= [g_0,\dots,g_j,1,g_{j+1},\dots, g_n]
\end{align*}

The simplicial weight function on $EG_{\bullet}$ is given by
$w([g_0,\dots,g_n]) = \sum_{i=0}^n L(g_i)$. The left $G$-action is
given, as usual, by $g[g_0,g_1,\dots,g_n] = [gg_0,g_1,\dots,g_n]$.
Note that with respect to the given weight function and action of
$G$, $EG_{\bullet}$ is a $\B$-bounded simplicial $G$-set for any $\B$.

For $\B$ a bounding class and $(X,w)$ a weighted set, $\B C(X)$ will denote
the collection of $\B$-bounded functions on $X$.  That is, $\B C(X)$ consists
of all functions $f : X \to \C$ such that there is $\phi \in \B$ with 
$| f(x ) | \leq \phi( w(x) )$ for all $x \in X$.  Dually, $\RH_{\B,w}(X)$ will
consist of all $f : X \to \C$ such that for all $\phi \in \B$, the sum
$\sum_{x \in X} |f(x)| \phi( x )$ is finite.  This is a completion of 
the collection of finitely supported chains on $X$ with respect to a family of seminorms on 
$\RH_{\B,w}(X)$ given by
\[ \| f \|_{\phi} := \sum_{x \in X} |f(x)| \phi(w(x)) \]
for $\phi \in \B$.  When $(X,w)$ is $(G,L)$, a discrete group with a word-length function, 
$\BG$ is the $\B$-Rapid Decay algebra of $G$. When $\B$ is equivalent to a countable 
bounding class the seminorms give $\BG$ the structure of a Fr\'echet algebra.

For $(X_{\bullet}, w_{\bullet})$  a weighted simplicial set and $\B$ a bounding
class, set $\B C_n(X_\bullet) = \RH_{\B, w_n}(X_n)$, the completion of $C_n(X_\bullet)$.
If each of the face maps of $(X_\bullet, w_\bullet)$ are $\B$-bounded, then the boundary
maps $d_n : C_n(X_\bullet) \to C_{n-1}(X_\bullet)$ extend to $\B$-bounded maps
$d_n : \B C_n(X_\bullet) \to \B C_{n-1}(X_\bullet)$.  The homology of the resulting bornological complex
$\left\{\B C_n(X_\bullet), d_n \right\}_{n \geq 0}$ is the $\B$-bounded homology of $(X_\bullet, w_\bullet)$,
$\B H_*(X)$.  Similarly the cohomology of the bornological cochain complex $\left\{ \B C^n(X_\bullet) = \B C( X_n ), \partial \right\}$
is the $\B$-bounded cohomology of $(X_\bullet, w_\bullet)$, $\B H^*(X)$.
In the case of the weighted cyclic bar construction, $N^{cy}_{\bullet}(G)$ we have identifications from \cite{JOR1},
\begin{align*}
\B H_*( N^{cy}_{\bullet}(G) ) &\isom HH^{t}_*( \BG )\\
\B H^*( N^{cy}_{\bullet}(G) ) &\isom HH_{t}^*( \BG )
\end{align*}

A group with length function $(G,L)$ has $\B$-cohomological dimension ($\B$-cd) $\leq n$ if there is a projective resolution of 
$\C$ over $\BG$ of length at most $n$.  Here we do not require finite generation over $\BG$ in any degree, but as in
\cite{JOR2} we require a $\B$-bounded $\C$-linear contracting homotopy.  We denote by $\bExt_{\BG}$ the $\Ext$ functor
on the category of bornological $\BG$-modules.  That is, for a bornological $\BG$-module, $\bExt_{\BG}(\C, M )$
is obtained by taking a projective resolution of $\C$ over $\BG$, apply $\bHom( \cdot, M )$, and take the
cohomology of the resulting cochain complex.

We have the following analogue of Lemma VIII.2.1 of \cite{Br1}.
\begin{lemma}
For a group with length function $(G,L)$ and $\B$ a composable bounding class, the following are equivalent.
\begin{enumerate}
	\item $\B$-cd $(G,L) \leq n$.
	\item $\bExt^{i}_{\BG}( \C, \cdot ) = 0$ for all $i > n$.
	\item $\bExt^{n+1}_{\BG}( \C, \cdot ) = 0$.
	\item If $0 \to K \to P_{n-1} \to \cdots \to P_1 \to P_0 \to \C \to 0$ is any sequence of bornological $\BG$-modules
			admitting a $\B$-bounded $\C$-linear contracting homotopy with each $P_j$ projective then $K$ is projective.
	\item If $0 \to K \to F_{n-1} \to \cdots \to F_1 \to F_0 \to \C \to 0$ is any sequence of bornological $\BG$-modules
			admitting a $\B$-bounded $\C$-linear contracting homotopy with each $F_j$ free then $K$ is free.
\end{enumerate}
\end{lemma}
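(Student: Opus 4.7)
The plan is to imitate Brown's proof of Lemma VIII.2.1 by cycling through the implications $(1) \Rightarrow (2) \Rightarrow (3) \Rightarrow (4) \Rightarrow (1)$, and then establishing $(4) \Leftrightarrow (5)$ separately. The only real work at each step is checking that the usual homological algebra goes through in the category of bornological $\BG$-modules: specifically, that a short exact sequence of bornological $\BG$-modules admitting a $\B$-bounded $\C$-linear splitting produces a long exact sequence in $\bExt_{\BG}(-,M)$. This follows by the standard recipe of applying $\bHom(-,M)$ to a projective resolution of the sequence and invoking the snake lemma; the contracting homotopy hypothesis in the paper's definition of $\bExt$ is what makes this argument work in the bornological setting.

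Implication $(1) \Rightarrow (2)$ is immediate: compute $\bExt^i_{\BG}(\C, M)$ as the cohomology of $\bHom(P_\bullet, M)$ for a projective resolution $P_\bullet$ of length at most $n$; it vanishes in degrees $> n$ since $P_i = 0$ there. The step $(2) \Rightarrow (3)$ is trivial. For $(3) \Rightarrow (4)$, splice the given sequence into short exact sequences
\[
0 \to K_{i+1} \to P_i \to K_i \to 0, \qquad K_0 = \C,\; K_n = K,
\]
each inheriting a $\B$-bounded $\C$-linear splitting from the contracting homotopy. The resulting long exact sequences in $\bExt$ together with $\bExt^{\geq 1}_{\BG}(P_i, -) = 0$ give the dimension-shift isomorphism
\[
\bExt^1_{\BG}(K, M) \isom \bExt^{n+1}_{\BG}(\C, M) = 0
\]
for every bornological module $M$. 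Projectivity of $K$ then follows by the usual Yoneda argument: given any bornological short exact sequence $0 \to M \to N \to K \to 0$ with $\B$-bounded $\C$-linear splitting, the class of the identity of $K$ under the connecting map $\bHom(K,K) \to \bExt^1_{\BG}(K,M)$ must vanish, yielding a $\BG$-linear splitting. The step $(4) \Rightarrow (1)$ is then obtained by applying $(4)$ to the standard bar-type free resolution $F_\bullet \to \C$ of \cite[Appendix]{O1} truncated at stage $n$: its $n$-th kernel is projective by $(4)$, giving a projective resolution of length $\leq n$.

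The equivalence $(4) \Leftrightarrow (5)$ is formal: $(5) \Rightarrow (4)$ is immediate since free implies projective, and for $(4) \Rightarrow (5)$ one first observes that the kernel $K$ is projective by $(4)$, hence a bornological summand of some free module $F$, and then absorbs the complementary summand using an Eilenberg-swindle style argument within the category of bornological $\BG$-modules (replacing $F_\bullet$ by $F_\bullet \oplus F^\infty$ where $F^\infty$ is a countable direct sum of free modules arranged along the tail of the resolution) so that the truncated $n$-th kernel becomes free.

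The main potential obstacle is bookkeeping around the bornological $\bExt$ functor, namely verifying that bounded $\C$-linear splittings yield genuine long exact sequences and that Eilenberg's swindle applies inside the Fr\'echet $\BG$-module category used in the paper. Once these two ingredients are in place, the entire argument runs parallel to Brown's classical proof.
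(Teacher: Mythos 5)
Your proposal is correct and follows exactly the route the paper takes: the cyclic chain of implications $(1)\Rightarrow(2)\Rightarrow(3)\Rightarrow(4)\Rightarrow(1)$ transplanted from Brown's Lemma VIII.2.1 (with the observation that $\B$-bounded $\C$-linear splittings are what make the long exact sequences in $\bExt$ and the dimension-shifting work in the bornological category), together with a bornological Eilenberg swindle for $(4)\Leftrightarrow(5)$. The paper's own proof is a two-line citation of precisely these two ingredients, so your write-up is simply a fleshed-out version of the intended argument.
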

\begin{proof}
(1) iff (2) iff (3) iff (4) as in \cite{Br1} needing virtually no modification.  (4) iff (5) follows from the
bornological Eilenberg Swindle.
\end{proof}

\vspace{.2in}

\section{Avoiding bounded conjugators, and the proof of Theorem A}

There is always, for each conjugacy class $<x>\in <G>$, an isomorphism of simplicial sets
\[ G_x \backslash G \underset{G}{\times} EG_{\bullet} \to N^{cy}_{\bullet}(G)_x. \]
(Here $G_x$ is the centralizer of the element $x \in G$.)
This map induces an isomorphism
\[ 
\underset{<x> \in <G>}{\coprod} G_x \backslash G \underset{G}{\times} EG_{\bullet} \to \underset{<x> \in <G>}{\coprod} N^{cy}_{\bullet}(G)_x. 
\]
When $x$ is not central in $G$, this isomorphism depends on the particular choice of representative for
each conjugacy class.  Recall from \cite{JOR1} that a conjugacy class $<x>$ has a {\underline{$\B$-bounded conjugator
length}} if there is an $f \in \B$ such that, for all $x, y \in S_{<x>} =$ the set of elements in $G$ conjugate to $x$, there is a $g \in G$ with $g^{-1} x g = y$
and $L_G( g ) \leq f( L_G(x) + L_G(y) )$.  This is equivalent to the
statement that the natural map
\[ 
\pi_x : G_x \backslash G \to S_x 
\] 
defined by $G_x g \mapsto g^{-1} x g$ is a $\B$-bounded isomorphism. When $<x> \in <G>$ has a $\B$-bounded conjugator length, the
map $G_x \backslash G \underset{G}{\times} EG_{\bullet} \to N^{cy}_{\bullet}(G)_x$ is a $\B$-bounded isomorphism of weighted simplicial sets, 
which in turn induces an isomorphism of cohomology groups
\[ 
HH^*(\BG)_{<x>} \isom {\B}H^*(BG_x).
\]

We are interested in the case where there is no such conjugacy bound (which is most of the time). In this case, the map
\[
\underset{{<x> \in <G>} }{\coprod}\left(G_x \backslash G \underset{G}{\times} EG_{\bullet}\right) \to N^{cy}_{\bullet}(G)
\] 
still exists as a $\B$-bounded map and an isomorphism of simplicial sets, however a choice of $\B$-bounded inverse may not exist.

Assume given a type $\B$ simplicial resolution $\Gamma_{\bullet} \surj G$.
Associated to this simplicial type resolution is a simplicial chain complex equipped with augmentation map
\[ 
{\B}C_*(N^{cy}_{\bullet}( \Gamma_{\bullet} )) := \{[n]\mapsto {\B}C_*(N^{cy}_{\bullet}(\Gamma_n))\}_{n\ge 0} \surj {\B}C_*(N^{cy}_{\bullet}(G)). 
\]
By Theorem 1 of \cite{O2}, this simplicial complex is of resolution type. Consequently, the double complex associated to ${\B}C_*(N^{cy}_{\bullet}( \Gamma_{\bullet} ))$ maps via the augmentation map to ${\B}C_*(N^{cy}_{\bullet}(G))$ by a map inducing an isomorphism in $\B$-bounded homology and cohomology.

Now consider the following diagram of simplicial sets:
\[
\xymatrix{
 \ar[d] \ar@<-1ex>[d] \ar@<-2ex>[d]& \\
 N^{cy}_{\bullet}(\Gamma_1) \ar[d] \ar@<-1ex>[d] \ar@<-1ex>[u] \ar@<-2ex>[u] & \\
N^{cy}_{\bullet}(\Gamma_0) \ar@<-1ex>[u] \ar@{>>}[d] \ar@{<-}[r]_-{\isom}^-{\phi} & \underset{{y \in <\Gamma_0>}}{\coprod} (\Gamma_{0})_y \backslash \Gamma_0 \underset{\Gamma_0}{\times} E(\Gamma_0)_{\bullet} \ar@{>>}[d]  \\
N^{cy}_{\bullet}(G) & \underset{{<x> \in <G>}}{\coprod} G_{x} \backslash G \underset{G}{\times} EG_{\bullet} \ar[l]
}
\]
For the map on the right, explicit centralizer subgroups $G_x$ and $(\Gamma_0)_y$ are chosen as follows: for each $<x>\in <G>$, fix a basepoint $x$ of the set $S_{<x>}$. Then for each $<y>\in <\Gamma_0>$, we choose a basepoint $y\in S_{<y>}$ of minimal word-length so that the augmentation map $\varepsilon:\Gamma_0\surj G$ induces a map of basepointed sets
\[
\left(S_{<y>},y\right)\overset{\varepsilon}{\longrightarrow}\left(S_{<\varepsilon(y)>},\varepsilon(y)\right)
\]
The map $\phi$ is a linearly bounded isomorphism with linearly bounded inverse (hence a $\B$-bounded isomorphism for all $\B$), as $\Gamma_0$ is a free group equipped with word-length metric. 
This isomorphism will allow us to construct a simplicial resolution of $\underset{{<x> \in <G>}}{\coprod} G_x \backslash G \underset{G}{\times} EG_{\bullet}$.

Define the bisimplicial set $X_{\bullet \bullet}$ to be:
\[ 
X_{\bullet n} = \begin{cases} 
\underset{{<y> \in <\Gamma_0>}}{\coprod} (\Gamma_0)_y \backslash \Gamma_0 \underset{\Gamma_0}{\times} E(\Gamma_0)_\bullet , & n = 0\\
N^{cy}_{\bullet}( \Gamma_n ),  & n > 0 
\end{cases} 
\]
The face maps and degeneracy maps between $X_{\bullet n}$ and $X_{\bullet m}$, for $m,n > 0$ are given by the corresponding maps $\{\partial^{\Gamma_{\bullet}}_i,s^{\Gamma_{\bullet}}_j\}$ in $N^{cy}_\bullet( \Gamma_\bullet )$ induced by the face and degeneracy maps of $\Gamma_{\bullet}$.  The face and
degeneracy maps involving $X_{\bullet 0}$ are determined by 
$\partial_i : X_{\bullet 1} \to X_{\bullet 0}$, $\partial_i = \phi^{-1} \circ \partial^{\Gamma_{1}}_i$ for $i = 0,1$
and $s_0 : X_{\bullet 0} \to X_{\bullet 1}$, $s_0 = s^{\Gamma_{0}}_0 \circ \phi$.

These maps satisfy the simplicial identities and are all $\B$-bounded.  In particular we have a
$\B$-bounded bisimplicial $G$ set $\{ [n] \mapsto X_{\bullet n} \}$.  There is a $\B$-bounded isomorphism of bisimplicial $G$-sets 
\[ 
\{ [n] \mapsto X_{\bullet n} \}_{n \geq 0} \to \{ [n] \mapsto N^{cy}_{\bullet}( \Gamma_n ) \}_{n \geq 0} 
\]

As the augmentation $\varepsilon: \Gamma_0 \to G$ is surjective, the induced map
\[ 
\varepsilon_0: \coprod_{y \in <\Gamma_0>} (\Gamma_0)_y \backslash \Gamma_0 \underset{\Gamma_0}{\times} E(\Gamma_0)_{\bullet}
\to \coprod_{<x> \in <G>} G_x \backslash G \underset{G}{\times} EG_{\bullet}
\]
is a surjective morphism of weighted simplicial sets, which moreover is bounded. Our aim is to show that the induced and given weight functions on the set to the right are equivalent. To this end, we define a section $\tilde{s}$ of $\varepsilon_0$ by using a set-theoretic splitting $G \inj\Gamma_0$ of
the augmentation $\varepsilon:\Gamma_0\surj G$ which is length-minimizing among all basepoint-preserving maps; precisely, if $x\in S_{<x>}$ is the basepoint of $S_{<x>}$, then i) $\tilde{s}(x)\in S_{<\tilde{s}(x)>}$ should be the basepoint of $S_{<\tilde{s}(x)>}$, and ii) $\tilde{s}(x)$ should have minimal word-length among all basepoints $y\in S_{<y>}\subset\Gamma_0$ for which $\varepsilon(y) = x$. Because of the manner in which the basepoints in $\Gamma_0$ were chosen, such a section always exists, and moreover it can be chosen so as to be length-preserving on basepoints. Of course, it can always be chosen so as to be length-preserving away from basepoints.

\begin{lemma}\label{lem:BddSplitting}
The section $\tilde{s}$ induces a splitting $\{\tilde{s}_n\}_{n\ge 0}$ of $\varepsilon_0$ on the level of graded sets which is $\B$-bounded.
\end{lemma}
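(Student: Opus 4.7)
The plan is to lift each equivalence class in the domain via a canonical representative and the set-map $\tilde{s}$, bounding the image weight by exploiting the length-preservation of $\tilde{s}$.

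First, every class in $G_x\backslash G\underset{G}{\times}EG_n$ admits a unique representative of the form $[G_x h,[1,g_1,\dots,g_n]]$: starting from a representative $[G_x g,[g_0,g_1,\dots,g_n]]$, the $G$-action with $g'=g_0$ absorbs $g_0$ into the first factor, leaving coset $G_x g g_0$, and a quick check shows this coset depends only on the equivalence class. Applying the paper's formula $\rho([x,x'])=\min_{g\in G} r(xg,g^{-1}x')$ to this canonical form, and using $L(\alpha)+L(\beta)\geq L(\alpha\beta)$ with equality when one factor is trivial, a direct computation yields
\[
\rho\bigl([G_x h,[1,g_1,\dots,g_n]]\bigr) = \min_{k\in G_x} L(kh) + \sum_{i=1}^{n} L(g_i).
\]

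For each coset $G_x h$ fix a shortest representative $h^{\ast}\in G_x h$ (breaking ties by a fixed well-ordering of $G$), so that $L(h^{\ast}) = \min_{k\in G_x} L(kh)$. Then define
\[
\tilde{s}_{n}\bigl([G_x h,[1,g_1,\dots,g_n]]\bigr) := \bigl[(\Gamma_0)_{\tilde{s}(x)}\,\tilde{s}(h^{\ast}),\,[1,\tilde{s}(g_1),\dots,\tilde{s}(g_n)]\bigr].
\]
That $\varepsilon_0\circ\tilde{s}_n=\mathrm{id}$ follows componentwise from $\varepsilon\circ\tilde{s}=\mathrm{id}_G$, together with the fact that $\tilde{s}$ sends basepoints of conjugacy classes in $G$ to basepoints of conjugacy classes in $\Gamma_0$, so the coproduct indexing is respected. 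Because $\tilde{s}$ is not a group homomorphism, committing to the canonical representative and shortest coset lift $h^{\ast}$ is what makes $\tilde{s}_n$ well-defined on equivalence classes; this is the main bookkeeping step of the argument, and it is the reason a naive componentwise lift from an arbitrary representative would fail.

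For $\B$-boundedness, bound the target weight by choosing the trivial centralizer element $\kappa = 1 \in (\Gamma_0)_{\tilde{s}(x)}$ in its defining minimum:
\[
\rho'\bigl(\tilde{s}_n(\cdot)\bigr) \leq L_{\Gamma_0}\bigl(\tilde{s}(h^{\ast})\bigr) + \sum_{i=1}^{n} L_{\Gamma_0}\bigl(\tilde{s}(g_i)\bigr) = L(h^{\ast}) + \sum_{i=1}^{n} L(g_i) = \rho(\cdot),
\]
the middle equality using that $\tilde{s}$ is length-preserving on all of $G$. Hence $\tilde{s}_n$ is length-non-increasing, and in particular $\B$-bounded for every bounding class $\B$.
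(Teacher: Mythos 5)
Your proof is correct and takes essentially the same approach as the paper: both define $\tilde{s}_n$ by applying the set-theoretic section $\tilde{s}$ componentwise to a fixed (minimal-weight) representative of each class and then bound the image weight by the sum of the componentwise bounds, using that the target weight is a minimum over representatives. Your normalization to the canonical form $[G_x h,[1,g_1,\dots,g_n]]$ and the use of exact length-preservation to get a length-non-increasing map are cosmetic refinements of the paper's argument, which only invokes a bounding function $\beta'\in\B$ for $\tilde{s}$ to conclude $\tilde{s}_n$ is bounded by $(n+2)\beta'$.
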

\begin{proof}
Denote by $L_0$ the proper length function on $\Gamma_0$ and by $L$ the proper length function
on $G$.  Fix a conjugacy class $<y> \in <\Gamma_0>$ and consider an element of
$(\Gamma_0)_y \backslash \Gamma_0 \underset{\Gamma_0}{\times} E(\Gamma_0)_{\bullet}$, $[(\Gamma_0)_y \gamma \times (\gamma_0, \gamma_1, \ldots, \gamma_n)]$.
We may assume $(\Gamma_0)_y \gamma \times (\gamma_0, \gamma_1, \ldots, \gamma_n)$ is a minimal weight
representative of the class, and that $\gamma$ is a minimal length element of the coset $(\Gamma_0)_y\gamma$,
so the weight of $[(\Gamma_0)_y \gamma \times (\gamma_0, \gamma_1, \ldots, \gamma_n)]$ is
$L_0(\gamma) + L_0( \gamma_0 ) + \cdots + L_0(\gamma_n )$.

Given that we have arranged the basepoints for the sets $\{S_{<y>}\}$ so as to make the augmentation basepoint-preserving, the induced map $\epsilon_0$ on the right has the form
\[
\epsilon_0[(\Gamma_0)_y \gamma \times (\gamma_0, \gamma_1, \ldots, \gamma_n)] =
[G_x \epsilon(\gamma) \times ( \epsilon(\gamma_0), \epsilon(\gamma_1), \ldots, \epsilon(\gamma_n))]
\]
where $x = \varepsilon(y)$.
The weight of this class is no more than $L( \epsilon(\gamma) ) + L(\epsilon(\gamma_0) ) + \cdots
+ L(\epsilon(\gamma_n) )$. If $\beta \in \B$ is a bounding function for $\epsilon$, then we see
that the weight of this class is no more than $\beta( L_0( \gamma ) ) + \beta( L_0( \gamma_0 ) ) + \cdots
+ \beta( L_0(\gamma_n))$. Thus $\epsilon_0$ is bounded by $(n+2)\beta \in \B$.

 For each $n$-simplex $[G_x g \times (g_0, g_1, \ldots, g_n)]
\in G_x \backslash G \underset{G}{\times} EG_n$, fix a minimal weight representative $G_x g \times (g_0, g_1, \ldots, g_n)$
with $g$ of minimal length in the coset $G_x g$. 
The desired section on $n$-simplices is defined by
\[ 
\tilde{s}_n [G_x g \times (g_0, g_1, \ldots, g_n)] = [ (\Gamma_0)_{\tilde{s}(x)} \tilde{s}(g) \times ( \tilde{s}(g_0), \tilde{s}(g_1), \ldots, \tilde{s}(g_n))].
\]
Note that the collection of maps $\{\tilde{s}_n\}_{n\ge 0}$ will not, in general, define a map of simplicial sets, because the original section $\tilde{s}$ cannot be chosen to be a homomorphism unless $G$ itself is free. However, it is clear that $\tilde{s}_n$ is a set-theoretic splitting of $\epsilon_0$ on $n$-simplicies which moreover is bounded. To see this, note that the weight of $\tilde{s}_n [G_x g \times (g_0, g_1, \ldots, g_n)]$ can be no greater than $L_0( \tilde{s}(g) ) +
L_0( \tilde{s}(g_0) ) + \cdots + L_0(\tilde{s}(g_n) )$. Thus if $\tilde{s}$ is bounded by $\beta' \in \B$, then
$\tilde{s}_n$ is bounded by $(n+2)\beta' \in \B$.
\end{proof}

As we have indicated, there are two basic approaches to endowing $\underset{{<x> \in <G>}}{\coprod} G_x \backslash G \underset{G}{\times} EG_{\bullet}$ with a weight function.
One could use the weight induced by the length structure on $G$ itself, or one could use
the weights induced by the surjection $\epsilon_0$. However, the existence of a bounded section guaranteed by the previous lemma shows that these two
methods yield $\B$-equivalent weight structures.

\begin{theorem}\label{thm:Res}
If $\Gamma_{\bullet} \surj G$ is a type $\B$  resolution of $G$,
then $X_{\bullet\bullet}$ is a $\B$-bounded augmented simplicial set for which the bisimplicial 
set $\{ [n] \to X_{\bullet,n} \}_{n \geq 0}$ yields isomorphisms in homology and cohomology
\begin{gather*}
{\B}H^*\left( \coprod_{<x> \in G} G_x \backslash G \underset{G}{\times} EG_\bullet\right)\isom {\B}H^*(X_{\bullet\bullet})\\
{\B}H_*\left(\underset{{<x> \in G}}{\coprod} G_x \backslash G \underset{G}{\times} EG_\bullet\right)\isom {\B}H_*(X_{\bullet\bullet})
\end{gather*}
\end{theorem}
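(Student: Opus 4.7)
My plan is to prove Theorem \ref{thm:Res} by extending $X_{\bullet,\bullet}$ to an augmented bisimplicial set $\tilde X_{\bullet,\bullet}$ with $\tilde X_{\bullet,-1} = Y_\bullet := \coprod_{<x>\in<G>} G_x \backslash G \underset{G}{\times} EG_\bullet$ and augmentation map $\epsilon_0 : X_{\bullet,0} \surj Y_\bullet$, and then showing via a spectral-sequence argument that $\mathrm{Tot}\,\B C_*(\tilde X_{\bullet,\bullet})$ is $\B$-bounded acyclic. This immediately implies that $\mathrm{Tot}\,\B C_*(X_{\bullet,\bullet})$ is $\B$-bounded quasi-isomorphic to $\B C_*(Y_\bullet)$, yielding both the homology and cohomology isomorphisms. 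The augmentation data is well-defined because the identity $\epsilon_0 \partial_0 = \epsilon_0 \partial_1$ on $X_{\bullet,1}$ reduces, via the iso $\phi$, to the coequalization $\epsilon \partial_0 = \epsilon \partial_1 : \Gamma_1 \to G$, which holds since $\Gamma^+_\bullet$ is an augmented simplicial group. The $\B$-boundedness of $\epsilon_0$ and the $\B$-equivalence of the induced and canonical weight structures on $Y_\bullet$ are exactly what Lemma \ref{lem:BddSplitting} and the ensuing discussion supply.

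Next I would analyze $\mathrm{Tot}\,\B C_*(\tilde X_{\bullet,\bullet})$ via the spectral sequence filtered in the $n$-direction. For each fixed first coordinate $p$, the associated column is the augmented simplicial set (in the $n$-variable)
\[
Y_p \xleftarrow{\epsilon_0} X_{p,0} \leftarrow X_{p,1} \leftarrow X_{p,2} \leftarrow \cdots,
\]
which via $\phi$ identifies with $Y_p \leftarrow N^{cy}_p(\Gamma_\bullet)$. The heart of the proof is to show each such column is $\B$-bounded acyclic, after which collapse of the spectral sequence yields $\B H_*(X_{\bullet,\bullet}) \cong \B H_*(Y_\bullet)$ (and the cohomology statement by dualization). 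To construct the required $\B$-bounded contracting homotopy I would combine two ingredients: (a) the bounded set-section $\tilde s_p : Y_p \to X_{p,0}$ of Lemma \ref{lem:BddSplitting}, used as an extra degeneracy at the augmentation level $-1 \to 0$; and (b) the componentwise application of the simplicial contraction $\{\tilde s_{k+1}: \Gamma_k \to \Gamma_{k+1}\}_{k \geq 0}$ of the type $\B$ resolution $\Gamma^+_\bullet \surj G$, used at higher levels. Each ingredient is $\B$-bounded by hypothesis, and since $\B$ is closed under sums and composition with linear bounding functions, the combined homotopy remains $\B$-bounded.

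The main obstacle is the precise construction of this chain contraction. The set-section $\tilde s_p$ of Lemma \ref{lem:BddSplitting} is defined by length-minimizing choices of coset representatives and is not a simplicial map; in particular it does not satisfy the extra-degeneracy compatibility identities $\partial_i \tilde s = \tilde s \partial_{i-1}$ that would allow the standard alternating-sum formula to produce a chain contraction directly from $\tilde s_p$ together with the $\{\tilde s_{k+1}\}$. I expect one must either replace $\tilde s_p$ by a bounded chain-level simplicial approximation, or construct a more delicate Alexander--Whitney-style homotopy that interpolates the set-section with the simplicial contraction of $\Gamma_\bullet$ and absorbs the resulting mismatch terms into $\B$-bounded corrections. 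Verifying $\B$-boundedness of these corrections, where closure of $\B$ under composition with linear bounding functions is essential, is the delicate technical point of the argument.
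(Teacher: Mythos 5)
Your overall architecture coincides with the paper's: the published proof consists precisely of observing that the section of Lemma \ref{lem:BddSplitting}, together with the simplicial contraction built into the type $\B$ resolution, make the augmented simplicial chain complex $[n]\mapsto \B C_*(X_{\bullet n})$ (augmented over $\B C_*\bigl(\coprod_{<x>} G_x\backslash G\underset{G}{\times}EG_\bullet\bigr)$) ``of resolution type,'' i.e.\ columnwise boundedly contractible, after which the conclusion is quoted from \cite[Theorem 2]{O2}; your spectral-sequence collapse is the same final step. However, you stop short at the one point that carries the content of the theorem --- producing the $\B$-bounded contracting homotopy of each column --- and the two repairs you propose (a chain-level simplicial approximation of $\tilde s_p$, or an Alexander--Whitney-style correction) are not what is needed and would lead you into unnecessary difficulty.

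The resolution is more elementary: you do not need $\tilde s_p$ to satisfy any extra-degeneracy identities, only to be a bounded linear section of $\epsilon_0$. Fix $p$ and write $C_n=\B C(X_{p,n})$ for $n\ge 0$, which under $\phi$ is $\B C(N^{cy}_p(\Gamma_n))$. The componentwise simplicial contraction of $\Gamma^+_\bullet$ \emph{does} satisfy the extra-degeneracy identities in the resolution direction (the face maps there are induced degreewise by the homomorphisms $\partial_i:\Gamma_n\to\Gamma_{n-1}$), so it already furnishes a bounded contracting homotopy $\{h_n\}_{n\ge -1}$ of the complex augmented over $\B C(N^{cy}_p(G))$; this is exactly the earlier invocation of \cite[Theorem 1]{O2}. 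Since the set map $Y_p\to N^{cy}_p(G)$ is a $\B$-bounded bijection, the induced map $j:\B C(Y_p)\to\B C(N^{cy}_p(G))$ is a bounded injection with $\epsilon=j\circ\epsilon_0$, hence $\ker\epsilon_0=\ker\epsilon$ and $d_1h_0=1$ on this common kernel. Now splice: set $h'_{-1}=\tilde s_p$, $h'_0=h_0(1-\tilde s_p\,\epsilon_0)$, and $h'_n=h_n$ for $n\ge 1$. One checks directly that $\epsilon_0h'_{-1}=1$, that $d_1h'_0+h'_{-1}\epsilon_0=1$ because $(1-\tilde s_p\,\epsilon_0)$ takes values in $\ker\epsilon_0$, and that $d_2h'_1+h'_0d_1=d_2h_1+h_0d_1=1$ because $\epsilon_0d_1=0$; all maps are composites of $\B$-bounded maps, so the homotopy is $\B$-bounded, and it dualizes to give the cohomology statement. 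With this splice in place your argument closes and agrees with the paper's.
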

\begin{proof}
The sections coming from the previous lemma, together with those arising from the type $\B$ resolution, imply that the augmented simplicial topological chain complex
\[
[n]\mapsto
\begin{cases}
{\B}C_*(X_{\bullet n})\quad n\ge 0\\
{\B}C_*\left(\underset{{<x> \in G}}{\coprod} G_x \backslash G \underset{G}{\times} EG_\bullet\right)\quad n = -1
\end{cases}
\]
is of resolution type. The result then follows as in \cite[Theorem 2]{O2}.
\end{proof}

\begin{corollary}
There are isomorphisms in $\B$-bounded homology and cohomology 
\begin{gather*}
{\B}H^*\left(\underset{<x> \in <G>}{\coprod} G_x \backslash G \underset{G}{\times} EG_\bullet\right) \isom {\B}H^*(N^{cy}_{\bullet}(G)) = H_t^*(\BG)\\
{\B}H_*\left(\underset{<x> \in <G>}{\coprod} G_x \backslash G \underset{G}{\times} EG_\bullet\right) \isom {\B}H_*(N^{cy}_{\bullet}(G)) = H^t_*(\BG)
\end{gather*}
\end{corollary}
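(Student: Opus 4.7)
The plan is to chain together three isomorphisms, all of which have essentially been set up earlier in this section. First, Theorem \ref{thm:Res} identifies the $\B$-bounded (co)homology of $\coprod_{<x>\in <G>} G_x\backslash G \underset{G}{\times} EG_\bullet$ with that of the bisimplicial set $X_{\bullet\bullet}$. Second, the $\B$-bounded isomorphism of bisimplicial $G$-sets $X_{\bullet\bullet}\isom N^{cy}_\bullet(\Gamma_\bullet)$, noted just before Theorem \ref{thm:Res} and which exists because $\phi$ is linearly bounded with linearly bounded inverse in the $n=0$ slice (as $\Gamma_0$ is free with a word-length metric) while $X_{\bullet n}=N^{cy}_\bullet(\Gamma_n)$ in positive vertical degrees by construction, transports the computation via the induced isomorphism of total complexes to $\B H^*(N^{cy}_\bullet(\Gamma_\bullet))$ and $\B H_*(N^{cy}_\bullet(\Gamma_\bullet))$.

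Third, I would apply the resolution-type property of $\B C_*(N^{cy}_\bullet(\Gamma_\bullet))\surj \B C_*(N^{cy}_\bullet(G))$ established via \cite[Theorem 1]{O2} and recorded in the preamble of this section: because $\Gamma_\bullet\surj G$ is a type $\B$ resolution, this augmentation induces isomorphisms $\B H^*(N^{cy}_\bullet(\Gamma_\bullet))\isom \B H^*(N^{cy}_\bullet(G))$ and $\B H_*(N^{cy}_\bullet(\Gamma_\bullet))\isom \B H_*(N^{cy}_\bullet(G))$. Composing with the identifications $\B H^*(N^{cy}_\bullet(G))\isom HH_t^*(\BG)$ and $\B H_*(N^{cy}_\bullet(G))\isom HH^t_*(\BG)$ recorded in the preliminaries closes the chain and delivers the asserted isomorphisms.

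Of the three steps, the only one requiring actual bookkeeping is the middle one: verifying that the bisimplicial isomorphism $X_{\bullet\bullet}\isom N^{cy}_\bullet(\Gamma_\bullet)$ is $\B$-bounded in both directions at every bidegree, and that the nonstandard face/degeneracy maps tying $X_{\bullet 0}$ to $X_{\bullet n}$ for $n\geq 1$ match those of $N^{cy}_\bullet(\Gamma_\bullet)$ under $\phi$. Both facts are implicit in the construction of $X_{\bullet\bullet}$ (the maps $\partial_i=\phi^{-1}\circ\partial_i^{\Gamma_1}$ and $s_0=s_0^{\Gamma_0}\circ\phi$ are precisely the pullbacks of the cyclic bar face and degeneracies under $\phi$), so the corollary is essentially a formal consequence of the machinery already assembled.
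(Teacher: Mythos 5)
Your proposal is correct and follows essentially the same route as the paper: the paper's own proof likewise combines the $\B$-bounded isomorphism $X_{\bullet\bullet}\isom \{[n]\mapsto N^{cy}_{\bullet}(\Gamma_n)\}$, the fact that the augmentation ${\B}C_*(N^{cy}_{\bullet}(\Gamma_{\bullet}))\surj {\B}C_*(N^{cy}_{\bullet}(G))$ induces isomorphisms in $\B$-bounded (co)homology, and Theorem \ref{thm:Res}. Your additional remarks on checking $\B$-boundedness of the bisimplicial isomorphism are consistent with, and slightly more explicit than, what the paper records.
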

\begin{proof}
The weighted bisimplicial set $X_{\bullet \bullet}$ is $\B$-boundedly isomorphic to the weighted bisimplicial set $\{ [n] \to N^{cy}_{\bullet}(\Gamma_n)\}_{n \geq 0}$, and the augmentation map 
$\varepsilon: {\B}C_*(N^{cy}_{\bullet}(\Gamma_{\bullet}))\surj {\B}C_*(N^{cy}_{\bullet}(G))$ induces an isomorphism in $\B$-bounded homology and cohomology. The result follows then from Theorem \ref{thm:Res}.
\end{proof}

\begin{corollary}\label{cor:GlobalIsom}
There is an isomorphism in $\B$-bounded cohomology
\[ 
HH^*_t(\BG) \isom {\B}H^*\left( \coprod_{<x> \in <G>} BG_x\right). 
\]
\end{corollary}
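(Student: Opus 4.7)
The plan is to deduce this corollary from the previous one by establishing, for each conjugacy class $<x>\in <G>$, a $\B$-bounded simplicial isomorphism
\[
G_x \backslash G \underset{G}{\times} EG_{\bullet} \isom G_x \backslash EG_{\bullet} = BG_x,
\]
and then taking coproducts over conjugacy classes. Since the previous corollary already gives $HH^*_t(\BG) \isom {\B}H^*\left( \coprod_{<x> \in <G>} G_x \backslash G \underset{G}{\times} EG_{\bullet} \right)$, all that remains is the identification above at the level of weighted simplicial sets.

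I would define the map $\Phi_{<x>}: G_x \backslash G \underset{G}{\times} EG_{\bullet} \to G_x \backslash EG_{\bullet}$ on $n$-simplices by $\Phi_{<x>}\bigl[ G_x g,\, (g_0, g_1, \ldots, g_n) \bigr] = G_x \cdot (g g_0, g_1, \ldots, g_n)$, with inverse $\Psi_{<x>}\bigl( G_x \cdot (g_0, \ldots, g_n) \bigr) = [G_x,\, (g_0, g_1, \ldots, g_n)]$. That these are well-defined simplicial maps and mutually inverse is a routine check from the defining relation $(G_x g \cdot g', y) \sim (G_x g, g' \cdot y)$ of the fiber product together with $G_x$-invariance on the quotient side.

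The main technical obstacle is verifying that both $\Phi_{<x>}$ and $\Psi_{<x>}$ are linearly bounded with respect to the natural weights --- the weight on the domain being induced from $(G,L)$ via the fiber-product construction recalled in the preliminaries, and the weight on $BG_x$ being the quotient weight on $G_x \backslash EG_{\bullet}$ with $G_x$ carrying the restricted word-length. For $\Phi_{<x>}$, a minimum-weight representative of $[G_x g, (g_0, \ldots, g_n)]$ realizes its weight as $L(hgg') + L((g')^{-1} g_0) + \sum_{i=1}^n L(g_i)$ for some optimal $h \in G_x$ and $g' \in G$; the image then lies in the $G_x$-orbit of $(hgg' \cdot (g')^{-1} g_0, g_1, \ldots, g_n) = (hg g_0, g_1, \ldots, g_n)$, and subadditivity of $L$ gives $L(hg g_0) \leq L(hgg') + L((g')^{-1} g_0)$, so the image weight is bounded by the preimage weight. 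For $\Psi_{<x>}$, the key observation is that $[G_x, (g_0, \ldots, g_n)] = [G_x, (h g_0, g_1, \ldots, g_n)]$ for any $h \in G_x$ (using $G_x h^{-1} = G_x$), so choosing $h$ to minimize $L(h g_0)$ shows that the domain weight is bounded by $\min_{h \in G_x} L(h g_0) + \sum_{i=1}^n L(g_i)$ up to the bounded contribution of $L(1)$, which is exactly the target weight on $BG_x$. Once these bounds are in place, assembling the $\Phi_{<x>}$ over all $<x>\in <G>$ yields a $\B$-bounded isomorphism of weighted simplicial sets $\coprod_{<x>} G_x \backslash G \underset{G}{\times} EG_{\bullet} \isom \coprod_{<x>} BG_x$, and applying ${\B}H^*$ combined with the previous corollary concludes the proof.
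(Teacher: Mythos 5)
Your reduction to the previous corollary is the right move, and the fiber-product collapse is correct: the map $[G_x g, (g_0,\ldots,g_n)]\mapsto G_x\cdot(gg_0,g_1,\ldots,g_n)$ is a simplicial bijection onto $G_x\backslash EG_\bullet$, and your two weight estimates (subadditivity of $L$ for $\Phi_{<x>}$, and the choice $g'=h^{-1}$ in the minimizing expression for $\Psi_{<x>}$) do show it is a linearly bounded isomorphism of weighted simplicial sets. The genuine gap is the final step, where you write $G_x\backslash EG_\bullet = BG_x$. In this paper $BG_x$ means the weighted classifying space of the group $G_x$ with its induced word-length, i.e.\ the bar construction built from $G_x$ alone, which in degree $n$ is $G_x^n$ with weight $\sum_i L(h_i)$; this is the object that feeds into Theorem A and Theorem B (for instance, $\B$-isocohomologicality of $G_x$ is a statement about the bar complex of $G_x$ itself). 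What your construction produces is $G_x\backslash EG_n\isom (G_x\backslash G)\times G^n$, a strictly larger model. The two are homotopy equivalent as simplicial sets, but that is not enough in this setting: one needs the equivalence to be implemented by $\B$-bounded maps with $\B$-bounded simplicial homotopies relative to the given weights, which requires choosing minimal-length coset representatives for $G_x\backslash G$ and controlling the lengths appearing in the standard contraction. That step carries the real analytic content and is exactly what the paper's one-line proof supplies by citing \cite[Prop.~1.4.5]{JOR1}, which gives ${\B}H^*\bigl(\coprod_{<x>} BG_x\bigr)\isom {\B}H^*\bigl(\coprod_{<x>} G_x\backslash G\underset{G}{\times} EG_\bullet\bigr)$ directly. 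As written, your argument only establishes the corollary with $\coprod_{<x>} G_x\backslash EG_\bullet$ on the right-hand side; you still owe the $\B$-bounded comparison of that model with the standard $BG_x$.
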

\begin{proof}
By \cite[Prop. 1.4.5]{JOR1} there is an isomorphism 
\[
{\B}H^*\left(\underset{<x> \in <G>}{\coprod} BG_x \right) \isom {\B}H^*\left( \underset{<x> \in <G>}{\coprod} G_x \backslash G \underset{G}{\times} EG_\bullet\right).
\]
where each centralizer subgroup $G_x$ is equipped with the induced word-length function coming from the embedding into $G$.
The previous corollary gives the result.
\end{proof}

\begin{corollary}\footnotemark
For each non-elliptic conjugacy class $<x> \in <G>$ with $\Dist( (x) ) \leq \B$, there is an isomorphism
\[ 
HC^*_t(\BG)_{<x>} \isom {\B}H^*(G_x/(x))
\]
where $G_x/(x)$ is equipped with the word-length function induced by the projection $G_x\surj N_x = G_x/(x)$.
\end{corollary}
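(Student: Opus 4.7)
The plan is to combine the Hochschild identification just proved in Corollary \ref{cor:GlobalIsom} with the $\B$-bounded SBI long exact sequence for the $<x>$-summand, and then to analyse the cyclic $S$-operator via the central circle fibration coming from the extension $1\to (x)\to G_x\to G_x/(x)\to 1$.

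First, I would restrict the global Hochschild isomorphism of Corollary \ref{cor:GlobalIsom} to the summand indexed by $<x>$, obtaining $HH^*_t(\BG)_{<x>}\isom {\B}H^*(BG_x)$, where $G_x$ carries its induced word-length from $G$. Because $<x>$ is non-elliptic, $x$ has infinite order and $(x)\isom\mathbb{Z}$ lies in the center of $G_x$, so the short exact sequence $1\to (x)\to G_x\to G_x/(x)\to 1$ classifies a principal $S^1$-fibration $B(x)\to BG_x\to B(G_x/(x))$. The hypothesis $\Dist((x))\leq\B$ is exactly what is required to keep this fibration, and the associated spectral sequence, controlled in the $\B$-bounded setting: it guarantees that the quotient word-length on $G_x/(x)$ defined via $G_x\surj G_x/(x)$ is $\B$-equivalent to the natural one, and that the inclusion $(x)\hookrightarrow G_x$ extends to a $\B$-bounded map of weighted simplicial sets.

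Next, I would invoke the $\B$-bounded SBI exact sequence for the $<x>$-summand,
\[
\cdots \to HC^{n-2}_t(\BG)_{<x>}\overset{S}{\to} HC^n_t(\BG)_{<x>} \to HH^n_t(\BG)_{<x>} \overset{B}{\to} HC^{n-1}_t(\BG)_{<x>} \to \cdots,
\]
and identify, following Burghelea \cite{B1} in the classical case, the $S$-operator on the $<x>$-summand with cup product by the Euler class of the central $S^1$-fibration $B(x)\to BG_x\to B(G_x/(x))$. Under the Hochschild identification above, SBI then becomes the $\B$-bounded Gysin sequence of this fibration. A standard two-out-of-three argument with the Gysin sequence identifies both $\ker(S)$ and $\coker(S)$ with ${\B}H^*(B(G_x/(x)))$, and chasing the SBI sequence yields the desired isomorphism $HC^*_t(\BG)_{<x>}\isom {\B}H^*(B(G_x/(x)))$.

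The main obstacle is the $\B$-bounded translation of Burghelea's classical contracting homotopy. Burghelea's argument for non-elliptic $<x>$ relies on a contraction built from the $(x)$-action on a simplicial model of $N^{cy}_\bullet(G)_x$, and making this contraction continuous in the $\B$-bounded seminorms requires controlling the length of the iterates $x^k$ inside $G$. The distortion hypothesis $\Dist((x))\leq\B$ provides exactly this control, so that the Burghelea/Gysin reduction extends from the algebraic to the $\B$-bounded setting and the proof goes through verbatim.
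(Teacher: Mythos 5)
Your proposal follows essentially the same route as the paper: restrict the Hochschild isomorphism of Corollary \ref{cor:GlobalIsom} to the $<x>$-summand, then feed it into the $\B$-bounded SBI/Connes--Gysin sequence (which the paper cites from \cite[\S 1.4]{JOR1}), using $\Dist((x))\leq\B$ to keep the Burghelea-style circle-fibration reduction for the central extension $1\to(x)\to G_x\to G_x/(x)\to 1$ under $\B$-bounded control. Your write-up simply makes explicit the Euler-class/Gysin mechanism that the paper leaves implicit in its citation.
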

\footnotetext{This result generalizes Theorem 2.4.4 of \cite{JOR1}}
\begin{proof}
The isomorphism of Corollary \ref{cor:GlobalIsom} splits over conjugacy classes
\[ 
HH^*_t(\BG)_{<x>} \isom {\B}H^*( BG_x ). 
\]
When $\Dist((x)) \leq \B$, the Connes-Gysin sequences in \cite[\S 1.4]{JOR1} for the summand $HC^*_t(\BG)_{<x>}$ along with this isomorphism for
$HH^*( \BG)_{<x>}$ gives the result.  
\end{proof}
In the case $\Dist( (x) ) > \B$, it is almost certainly true that $HC^*_t(\BG)_{<x>} \isom {\B}H^*(G_x) \tensor HC^*(\C)$.
To conclude this, however, one needs to know that if $\Z$ is embedded into a group with distortion greater than $\B$, then
${\B}H^*(\Z)$ is trivial for $*\geq 1$ (where $\mathbb Z$ is equipped with the word-length function induced by the embedding).  This is conjectured to be true, but is currently unverified.

\begin{nnremark} As noted, the results of this section were previously known only for groups with conjugacy classes satisfying $\B$-bounded conjugacy length bounds of \cite{JOR1}. They may be interpreted as saying that, even when the conjugacy problem for $G$ cannot be solved in an appropriately bounded timeframe, or even solved at all, it can always be solved ``up to bounded homotopy''.
\end{nnremark}
\vspace{.5in}

\section{A class of groups satisfying $\B$-SrBC}

We start with a technical lemma.

\begin{lemma}\label{lem:BCohDimExts}
Suppose $N \inj G \surj Q$ is an extension of groups with length functions, and let $\B$
be a composable bounding class.  If $\B\textrm{-cd }N < \infty$ and $\B\textrm{-cd }Q < \infty$ then $\B\textrm{-cd }G \leq \B\textrm{-cd }N + \B\textrm{-cd }Q$.
\end{lemma}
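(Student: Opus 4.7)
The plan is to mimic the classical subadditivity proof for cohomological dimension of a group extension (the bounded analogue of \cite[VIII.2.4]{Br1}): construct an explicit projective resolution of $\C$ over the rapid decay algebra of $G$ of length $m+n$, where $m = \B\textrm{-cd }N$ and $n = \B\textrm{-cd }Q$, checking at every stage that the contracting homotopies remain $\B$-bounded. Composability of $\B$ will enter decisively at the final step.

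First, by the equivalence (1)$\Leftrightarrow$(5) of the preceding lemma, choose free resolutions $P_\bullet \surj \C$ over $\RH_{\B,L_N}(N)$ of length $m$ and $\tilde Q_\bullet \surj \C$ over $\RH_{\B,L_Q}(Q)$ of length $n$, each carrying a $\B$-bounded $\C$-linear contracting homotopy. Pick a set-theoretic section $\sigma : Q \to G$ of the quotient map that is length-minimizing on each fiber; then $(\nu,q)\mapsto \nu\cdot\sigma(q)$ gives a bijection $N\times \sigma(Q)\to G$ which is $\B$-bounded with $\B$-bounded inverse, and identifies $\RH_{\B,L_G}(G)$ with a bornologically free right $\RH_{\B,L_N}(N)$-module on the basis $\sigma(Q)$. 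Induction then produces an $\RH_{\B,L_G}(G)$-free complex
\[
F_\bullet := \RH_{\B,L_G}(G) \btensor_{\RH_{\B,L_N}(N)} P_\bullet
\]
of length $m$, augmenting to $\RH_{\B,L_Q}(Q)$ (viewed as a $\RH_{\B,L_G}(G)$-module via inflation along $G\surj Q$); the contracting homotopy on $P_\bullet$ extends by the identity on the $\RH_{\B,L_G}(G)$ factor to a $\B$-bounded contracting homotopy on $F_\bullet$.

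Next, view $\tilde Q_\bullet$ as a complex of $\RH_{\B,L_G}(G)$-modules via inflation, and apply the induction construction term-by-term to each $\tilde Q_j$ (which is $\RH_{\B,L_Q}(Q)$-free of some rank, and therefore admits an $F$-type $\RH_{\B,L_G}(G)$-free resolution of length $m$). The result is a first-quadrant double complex $R_{\bullet,\bullet}$ whose rows resolve the inflated columns of $\tilde Q_\bullet$, and whose associated total complex $T_\bullet := \mathrm{Tot}(R_{\bullet,\bullet})$ is $\RH_{\B,L_G}(G)$-free of length $m+n$ and augments to $\C$.

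The main obstacle is showing that $T_\bullet$ carries a $\B$-bounded $\C$-linear contracting homotopy. The standard zig-zag recipe combines the horizontal row homotopies (supplied by the induction step) with the vertical homotopy lifted from $\tilde Q_\bullet$ through $\sigma$: the image of a chain in total degree $k$ is computed as an alternating composition of at most $k+1$ of these individual bounded maps. Because each individual bounding function belongs to $\B$ and $\B$ is composable, this finite composition is again controlled by a function in $\B$, and the total homotopy is thus $\B$-bounded. Since $T_\bullet$ is then a free resolution of $\C$ over $\RH_{\B,L_G}(G)$ of length $m+n$ admitting a $\B$-bounded $\C$-linear contracting homotopy, the inequality $\B\textrm{-cd }G \leq m+n$ follows at once from the definition.
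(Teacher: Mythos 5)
Your argument is correct in outline, but it takes a genuinely different route from the paper's. You prove the bound by verifying condition (1) (equivalently (5)) of the preceding lemma directly: induce a length-$m$ resolution of $\C$ over $\RH_{\B,L_N}(N)$ up to $\RH_{\B,L_G}(G)$ via a length-minimizing section $\sigma$ of $G\surj Q$, resolve each inflated term of a length-$n$ resolution over $\RH_{\B,L_Q}(Q)$, and totalize to get an explicit free resolution of length $m+n$ with a $\B$-bounded contracting homotopy. The paper instead argues cohomologically: it forms the double complex $C^{p,q}=\B\Hom_{\RH_{\B,L_Q}(Q)}(R_p\btensor S_q,M)$ with $S_q=\C\btensor_{\RH_{\B,L_N}(N)}P_q$ for an arbitrary bornological coefficient module $M$, compares the two spectral sequences to conclude $\B H^n(G;M)=0$ for all $n>\B\textrm{-cd }N+\B\textrm{-cd }Q$, and then invokes the $\bExt$-vanishing characterization (conditions (2)--(3) of the preceding lemma). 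The trade-off is this: your construction yields an explicit small resolution, but it carries the burden of checking that the Cartan--Eilenberg-type double complex can be assembled with honest ($d_v^2=0$), $G$-linear, $\B$-bounded vertical differentials and that the zig-zag homotopy on the total complex is itself $\B$-bounded --- steps you assert rather than verify, and which deserve the same care you give the section $\sigma$ (in particular, the homotopy on the induced complex is $\mathrm{id}\btensor h$ only after rewriting $F_q$ as $\ell^1_{\B}(\sigma(Q))\btensor_{\C}P_q$ via the section, since $h$ is merely $\C$-linear). The paper's spectral-sequence route quarantines all such boundedness issues inside already-established machinery and needs only vanishing statements; yours is the bounded analogue of the resolution-theoretic proof of subadditivity, the paper's of the Lyndon--Hochschild--Serre proof.
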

\begin{proof}
We follow the setup of \cite[Theorem 1.1.12]{O1}.
Denote by $L_G$ the length function on $G$, which by restriction is the length function on $N$, 
and by $L_Q$ the quotient length function on $Q$. As $\B\textrm{-cd }Q < \infty$, there is a finite length 
free resolution of $\C$ over $\RH_{{\B},L_Q}(Q)$, with $\B$-bounded $\C$-linear splittings.  Denote 
this resolution by $R_*$, let $P_*$ be a free resolution of $\C$ over $\RH_{{\B},L_G}(G)$, and set
$S_q = \C \btensor_{\RH_{\B,L_G}(N)}P_q$.

Fix an $\RH_{\B,L_G}(G)$-module $M$ and let $C^{p,q} = \B\Hom_{\RH_{\B,L_Q}(Q)}( R_p \btensor S_q, M )$.
The spectral sequence associated to the row filtration collapses at $E_2^{*,*}$ with $E_2^{0,q} \isom \B H^{q}( G; M )$
and $E_2^{p,q} = 0$ if $p > 0$.

Consider the spectral sequence associated to the column filtration.  
For $p > \B\textrm{-cd }Q$, $C^{p,q} = 0$, so $E_1^{p,q} = 0$ for $p > \B\textrm{-cd }Q$.  Similarly,
by the finiteness condition on $\B\textrm{-cd }N$, $E_1^{pq} = 0$ for $q > \B\textrm{-cd }N$.
Consequently, $E_2^{p,q} = 0$ whenever $p+q > \B\textrm{-cd }Q + \B\textrm{-cd }N$.
As a consequence, $\B H^n(G;M) = 0$ for $n > \B\textrm{-cd }Q + \B\textrm{-cd }N$ for all coefficients $M$.
\end{proof}

Let $\mathcal{C}$ be the collection of all countable groups $G$ which satisfy the nilpotency condition, and 
let $\B\textrm{-}\mathcal{C}$ be the collection of all groups with length function $(G,L)$ which satisfy
the $\B$-nilpotency condition.  Any group $(G, L) \in \B\textrm{-}\mathcal{C}$ satisfies $\B$-SrBC, just as
any $G \in \mathcal{C}$ satisfies SBC.  Moreover if $(G,L)$ satisfies $\B$-SrBC, then $G$ satisfies
SrBC.  Recall that a group with length function $(G, L)$ is said to be \underline{$\B$-isocohomological}($\B$-IC)
if the inclusion $\CG \to \BG$ induces isomorphisms $\B H^p(G; \C) \to H^p(G;\C)$ for all $p$.  The
group is \underline{$\B$-strongly isocohomological}($\B$-SIC) if the maps $\B H^p(G; M) \to H^p(G;M)$
are isomorphisms for every $p$ and every bornological $\BG$-module $M$.
\begin{theorem}
Suppose the discrete group $G$ lies in $\mathcal{C}$ and $L$ is a proper length function on $G$.
If for each non-elliptic conjugacy class $<x> \in <G>$, the centralizer $(G_x, L)$ is $\B$-IC and
the embedding $\Z \isom (x) \inj G_x$ is at most $\B$-distorted, then $(G,L) \in \B\textrm{-}\mathcal{C}$.
\end{theorem}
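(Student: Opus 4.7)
The plan is to verify the $\B$-nilpotency condition one conjugacy class at a time, transferring classical nilpotency of $S$ across a comparison isomorphism in cyclic cohomology.

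Fix a non-elliptic conjugacy class $<x>\in <G>$. Since $G_x$ inherits the word-length from $G$, the distortion of $(x)$ in $G_x$ equals its distortion in $G$; by hypothesis this is at most $\B$, so the preceding corollary yields $HC^*_t(\BG)_{<x>}\isom{\B}H^*(B(G_x/(x)))$, while Burghelea's classical theorem gives $HC^*(\CG)_{<x>}\isom H^*(B(G_x/(x)))$. Under both identifications, Connes' $S$ operator corresponds to cup product with the Euler class $e$ of the circle fibration $B(x)\to BG_x\to B(G_x/(x))$, as is visible from the Connes-Gysin long exact sequences underlying both identifications. Thus the nilpotency of $\cup e$ on $H^*(B(G_x/(x)))$ (granted by $G\in\mathcal{C}$) transfers to nilpotency on ${\B}H^*(B(G_x/(x)))$ as soon as the comparison map between these groups is an isomorphism, i.e., as soon as $G_x/(x)$ is $\B$-IC.

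To show $G_x/(x)$ is $\B$-IC I would compare the $\B$-bounded and classical Gysin sequences of the fibration $B(x)\to BG_x\to B(G_x/(x))$ in a commuting ladder of long exact sequences. The vertical maps on the $BG_x$-terms are isomorphisms by the hypothesis that $G_x$ is $\B$-IC. The $\B$-distortion assumption forces the induced length on $(x)\cong\Z$ to be $\B$-equivalent to the standard length, and $\Z$ with standard length is $\B$-IC for every bounding class $\B$ (it admits an obvious length-two free resolution), so $(x)$ itself is $\B$-IC. A strong induction on cohomological degree, applying the five-lemma to a five-term window of the Gysin ladder with ${\B}H^n(B(G_x/(x)))$ in the central slot flanked by two $BG_x$-terms and two lower-degree $B(G_x/(x))$-terms, then propagates the isomorphism to every degree; the base case in low degrees is trivial. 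Once $G_x/(x)$ is $\B$-IC, nilpotency of $S$ transfers from $HC^*(\CG)_{<x>}$ to $HC^*_t(\BG)_{<x>}$ as described.

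The main obstacle I expect is the naturality argument identifying $S$ with $\cup e$ consistently across both the bornological and classical Connes-Gysin identifications, so that the comparison ladder genuinely intertwines the $S$ operators on the two sides. A secondary technicality is arranging the degree bookkeeping in the inductive five-lemma argument so that the Gysin ladder can be windowed uniformly; neither of these is deep, but both warrant careful diagram-chasing at the outset.
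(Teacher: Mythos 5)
Your route differs genuinely from the paper's, and the difference matters. The paper's argument never mentions $G_x/(x)$ being isocohomological: from Theorem A and the hypothesis that $G_x$ is $\B$-IC one gets
$HH^*_t(\BG)_{<x>}\isom {\B}H^*(BG_x)\isom H^*(BG_x)\isom HH^*(\CG)_{<x>}$,
and then the comparison map of the two Connes--Gysin (SBI) ladders
$\cdots\to HC^{n-1}\overset{S}{\to}HC^{n+1}\to HH^{n+1}\to HC^{n}\to\cdots$
together with the five lemma yields $HC^*_t(\BG)_{<x>}\isom HC^*(\CG)_{<x>}$ \emph{and} the identification of the periodicity operators simultaneously, because $S$ is literally one of the maps in that ladder. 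The distortion hypothesis enters only to guarantee the topological Connes--Gysin sequence for the summand behaves as in Theorem A. This sidesteps both of the obstacles you flag at the end of your proposal.

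You instead identify both cyclic summands with the cohomology of $B(G_x/(x))$ and reduce to the claim that $G_x/(x)$ is $\B$-IC. That is a strictly stronger intermediate statement than anything in the hypotheses (which concern only $G_x$ and the distortion of $(x)$), and your proposed derivation of it presupposes a $\B$-bounded Gysin sequence for the central extension $(x)\inj G_x\surj G_x/(x)$ that maps naturally to the classical one. This is the genuine gap: in the bornological category such a sequence must be built --- one needs the bounded Lyndon--Hochschild--Serre spectral sequence (as in the paper's lemma on $\B$-cd of extensions, which already requires a composable bounding class and $\B$-bounded contracting homotopies), the identification of its $E_2$-page via ${\B}H^*((x))\isom H^*(\Z)$ for the undistorted copy of $\Z$, and the naturality of the resulting ladder. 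None of this is implausible, but it is the crux of your argument and is asserted rather than established; likewise the compatible identification of $S$ with $\cup\, e$ on both sides is exactly the naturality statement the SBI-ladder route renders unnecessary. I would restructure along the paper's lines: do the comparison at the Hochschild level, where Theorem A and $\B$-IC of $G_x$ do all the work, and let the SBI sequence transport the isomorphism, $S$-equivariantly, into cyclic cohomology.
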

\begin{proof} 
By Theorem A $HC^*(\CG)_{<x>} \isom HC^*_{t}( \BG )_{<x>}$.  This isomorphism identifies
periodicity operators.
\end{proof}
As shown in the introduction this class contains all semihyperbolic groups $G$ with word-length
function $L$, which satisfy the nilpotency condition.
We remark that this, in turn, contains all word-hyperbolic and finitely generated abelian groups.

We now identify and examine a class of groups inside $\B\textrm{-}\mathcal{C}$.
\begin{definition}
Let $\BE$ be the collection of all countable groups with length function $(G,L)$ which
satisfy the following two properties.
\begin{enumerate}
	\item $G$ has finite $\B$-cd.
	\item For every non-elliptic conjugacy class $<x> \in <G>$, $N_x$ has finite $\B$-cd
		in the induced length function.
	\item For every non-elliptic conjugacy class $<x> \in <G>$, the distortion of $(x)$
		as a subgroup of $G$ is $\B$-bounded.
\end{enumerate}
Let $\E$ be the collection of all countable groups in $\B_{max}\textrm{-}\E$.
\end{definition}
It is clear that $\BE \subset \B\textrm{-}\mathcal{C}$ and $\E \subset \mathcal{C}$.

The following is clear from the definition of $\B$-SIC.
\begin{lemma}
Suppose $G$ is a countable group in $\E$ with length function $L$.  If $(G,L)$ is $\B$-SIC and 
for each non-elliptic $<x> \in <G>$, $N_x$ is $\B$-SIC, then $(G,L)$ is in $\BE$.
\end{lemma}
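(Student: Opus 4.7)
The plan is to verify each of the three conditions defining $\BE$ for $(G,L)$ in turn, using the $\B$-SIC hypotheses to promote the classical finite cohomological dimension data furnished by $G \in \E$ to finite $\B$-cd.

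For condition (1): from $G \in \E$, extract a finite bound $n$ on the classical cohomological dimension of $G$, so that $H^p(G;M) = 0$ for every $p > n$ and every $\C[G]$-module $M$. In particular this vanishing applies when $M$ is any bornological $\BG$-module, regarded as a $\C[G]$-module by restriction along the natural inclusion $\CG \to \BG$. The $\B$-SIC hypothesis then yields the comparison isomorphism $\B H^p(G;M) \isom H^p(G;M) = 0$ for every $p > n$ and every bornological $\BG$-module $M$, i.e.\ $\bExt^p_{\BG}(\C, M) = 0$ in this range. Part (3) of the characterization lemma from Section 2 then forces $\B$-cd $G \leq n$.

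The identical argument applied to each non-elliptic $N_x$ — with its induced length function, its finite classical cd supplied by $G \in \E$, and $\B$-SIC by hypothesis — yields condition (2). Condition (3), the $\B$-boundedness of the distortion of each infinite cyclic subgroup $(x) \inj G$, is a purely geometric property of the length structure on $G$ and is independent of the cohomological data controlled by $\B$-SIC; it is inherited directly from the data defining $G \in \E$.

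The only potential obstacle lies in the scope of the $\B$-SIC hypothesis: the characterization lemma demands vanishing of $\bExt^p_{\BG}(\C, M)$ for \emph{every} bornological $\BG$-module $M$, not merely for modules with a bornology adapted to $(G,L)$. Since $\B$-SIC is stated for all bornological $\BG$-modules by definition, this causes no difficulty, and the lemma follows essentially directly from the definition of $\B$-SIC — consistent with the preceding remark that the statement is clear from that definition.
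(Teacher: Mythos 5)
Your verification of conditions (1) and (2) in the definition of $\BE$ is correct and is exactly the argument the paper intends: the paper offers no proof beyond the remark that the lemma ``is clear from the definition of $\B$-SIC,'' and the intended content is precisely the promotion you describe --- finite classical cohomological dimension of $G$ (resp.\ of $N_x$) gives $H^p(G;M)=0$ for $p>n$ and all coefficients, $\B$-SIC transports this to $\B H^p(G;M)\isom\bExt^p_{\BG}(\C,M)=0$ for every bornological $\BG$-module $M$, and the characterization lemma of Section 2 then bounds the $\B$-cd. (A minor caveat: that characterization lemma is stated for \emph{composable} bounding classes, a hypothesis not repeated here.)

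The genuine gap is in your treatment of condition (3). You assert that the $\B$-boundedness of $\Dist((x))$ for each non-elliptic $<x>$ ``is inherited directly from the data defining $G\in\E$.'' It is not: $\E$ is defined as the class of countable groups in $\B_{max}\textrm{-}\E$, and since every non-decreasing function is dominated by a function in $\B_{max}$, the distortion clause in the definition of $\B_{max}\textrm{-}\E$ is vacuous. Membership in $\E$ therefore records only the two classical finiteness conditions on cohomological dimension and carries no metric information about the embeddings $(x)\inj G$ at all; nor do the $\B$-SIC hypotheses supply any. So for a general bounding class $\B$ the required bound $\Dist((x))\leq\B$ simply does not follow from the stated hypotheses, and your sentence covering condition (3) is an unjustified assertion rather than a proof. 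In fairness, this defect is already latent in the paper: the definition of $\BE$ announces ``the following two properties'' and then lists three, and the unproved lemma was evidently written against the two-condition version. A repaired statement should either add the hypothesis that each $(x)$ is at most $\B$-distorted in $G$ (as the theorem earlier in Section 4 does explicitly) or restrict the conclusion to conditions (1) and (2).
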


\begin{theorem}
The class $\BE$ is closed under the following operations when considered in the induced length functions.
	\begin{enumerate}
		\item Taking subgroups.
		\item Taking extensions of groups with length functions.
		\item Acting on trees with vertex and edge stabilizers in $\BE$.
	\end{enumerate}
\end{theorem}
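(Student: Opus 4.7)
My plan is to verify each of the three defining conditions of $\BE$ for each of the three operations. Condition (3), $\B$-bounded distortion of $(x)$, depends only on the length function induced on $(x)$ and is usually immediate from hypotheses; condition (1), finite $\B\textrm{-cd}$, follows from existing machinery (Lemma \ref{lem:BCohDimExts} for extensions, a Shapiro-type restriction for subgroups, and a Bass-Serre spectral sequence for tree actions); the main burden is condition (2), which requires relating centralizers in the new group to those in the old.

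For subgroups $H \leq G$ with the induced length function, condition (3) is immediate since $L_H = L_G|_H$. For condition (1), I establish subgroup monotonicity $\B\textrm{-cd}\,H \leq \B\textrm{-cd}\,G$ by restricting a finite-length $\B$-free resolution of $\C$ over $\BG$ along a choice of coset representatives of minimal $L_G$-length, exhibiting each restricted piece as free over the $\B$-rapid decay algebra of $H$ and checking that the contracting homotopy remains $\B$-bounded. Condition (2) then reduces to condition (1) applied to the inclusion $H_x/(x) \leq G_x/(x)$ with the length function induced from $G$, noting that an element is non-elliptic in $H$ if and only if it is non-elliptic in $G$.

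For extensions $N \inj G \surj Q$ with $N, Q \in \BE$, condition (1) is Lemma \ref{lem:BCohDimExts} directly. For conditions (2) and (3), fix a non-elliptic $<x> \in <G>$ with image $\bar x \in Q$, and split on whether $\bar x$ has finite or infinite order. If $\bar x$ is non-elliptic, then $G_x$ fits in an extension with kernel $N \cap G_x$ and quotient $Q' \leq Q_{\bar x}$; passing to the quotient by $(x)$ yields an extension of groups with finite $\B\textrm{-cd}$, so Lemma \ref{lem:BCohDimExts} again applies, and the distortion of $(x)$ in $G$ is dominated by that of $(\bar x)$ in $Q$. If $\bar x$ is torsion of order $k$, then $x^k \in N$ is non-elliptic; the hypothesis $N \in \BE$ gives finiteness and distortion bounds for $(x^k)$, which transfer to $(x)$ up to a linear factor. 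For a group $G$ acting on a tree $T$ with vertex and edge stabilizers in $\BE$, the Bass-Serre spectral sequence together with the $\bExt$-criterion for $\B\textrm{-cd}$ yields $\B\textrm{-cd}\,G \leq \max\{\B\textrm{-cd}\,G_v,\, \B\textrm{-cd}\,G_e + 1\}$, establishing condition (1). For conditions (2) and (3), split on whether $x$ fixes a vertex (then $x \in G_v$ and the conditions lift from $G_v \in \BE$ by the subgroup and extension results above) or translates along an axis $A$ (then $G_x$ preserves $A$ with kernel in edge stabilizers and quotient in $\mathrm{Aut}(A)$), giving an extension for $G_x/(x)$ to which the previous machinery applies.

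The main obstacle I expect is condition (3) in the extension and tree-action cases, namely controlling distortion when $x$ interacts nontrivially with the structure. In the extension case with $\bar x$ torsion, the $\B$-bound on distortion of $(x^k)$ in $N$ must be shown to yield a $\B$-bound for $(x)$ in $G$; this requires carefully matching the induced length on $N$ and the quotient length on $Q$ with $\B$-equivalence. A secondary subtlety is the bornological decomposition of $\BG$ underlying the Shapiro-type subgroup argument: distortion of $H$ in $G$ could in principle disturb $\B$-boundedness of the coset decomposition, and must be handled by choosing coset representatives of minimal $L_G$-length and verifying the induced seminorms lie in $\B$.
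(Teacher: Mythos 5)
Your proposal follows essentially the same route as the paper, which simply defers to Theorem 4.3 of Ji's paper \cite{Ji1} with three one-line annotations: for subgroups, that a projective resolution of $\C$ over $\BG$ restricts to one over $\RH_{\B,L}(H)$ (your coset-representative argument is exactly why this holds boundedly); for extensions, Lemma \ref{lem:BCohDimExts}; and for tree actions, the Bass--Serre machinery with stabilizers carrying the restricted length functions. Your write-up supplies the case analysis on centralizers and the distortion bookkeeping that the paper leaves implicit in the citation, but it is the same argument.
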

\begin{proof}
We follow the proof of Theorem 4.3 of \cite{Ji1}.
(1) follows from \cite{Ji1} after noting that if $(G,L)$ is a group with length function and $H < G$, any projective resolution of 
$\C$ over $\BG$ is also a projective resolution of $\C$ over $\RH_{\B,L}(H)$.

(2) follows as in \cite{Ji1} using Lemma \ref{lem:BCohDimExts} to bound the $\B$-cd of the involved extensions.

(3) follows as in \cite{Ji1}, noting that each of the stabilizers involved are assumed to be in $\BE$ in the restricted length function.
\end{proof}

\vspace{0.5in}


\newpage

\end{document}